\newcommand{\email}[1]{\href{mailto:#1}{#1}}
\newtheorem{assumption}{Assumption}
\newtheorem{lemma}{Lemma}
\newtheorem{theorem}{Theorem}
\newtheorem{corollary}{Corollary}
\crefname{algorithm}{algorithm}{algorithms}
\Crefname{algorithm}{Algorithm}{Algorithms}
\newcommand{\LameName}{Lam\'e\xspace}
\newcommand{\norm}[1]{\lVert#1\rVert}
\newcommand{\modulus}[1]{|#1|}
\newcommand{\jump}[1]{\llbracket #1 \rrbracket}
\newcommand{\ujump}[0]{\llbracket \disp \rrbracket}
\newcommand{\ujumpn}[0]{\llbracket {u}_n \rrbracket}
\newcommand{\ujumpt}{\llbracket \dot{\disp}_\tau \rrbracket}
\newcommand{\Lame}[0]{\lambda_\text{\textnormal{\LameName}}}
\newcommand{\Cfrac}{{\Omega_l}}
\newcommand{\perm}{\mathbf{K}}
\newcommand{\visc}{\mu_{f}}
\newcommand{\lambdat}{{\bm{\lambda}_\tau}}
\newcommand{\lambdan}{{{\lambda}_n}}
\newcommand{\lambdafull}[0]{\bm{\lambda}}
\newcommand{\disp}{\mathbf{u}}
\renewcommand\vec{\mathbf}
\newcommand{\rhoRef}[0]{\rho_{f,r}}
\newcommand{\gravity}[0]{\vec{g}}
\newcommand{\Hdiv}[1]{{H(\text{div}; #1)}}
\title{An efficient preconditioner for mixed-dimensional contact poromechanics based on the fixed stress splitting scheme}
\date{} 					
\author{\href{https://orcid.org/0009-0006-7095-3044}{\includegraphics[scale=0.06]{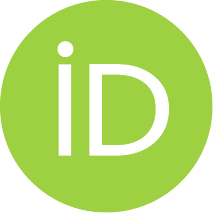}Yury Zabegaev}
\thanks{Center for Modeling of Coupled Subsurface Dynamics, Department of Mathematics, University of Bergen, Bergen, Norway}\\
\email{yury.zabegaev@uib.no}
\and
\href{https://orcid.org/0000-0002-0212-7959}{\includegraphics[scale=0.06]{orcid.pdf}Inga Berre}\footnotemark[1]\\
\email{inga.berre@uib.no}
\and
\href{https://orcid.org/0000-0002-0333-9507}{\includegraphics[scale=0.06]{orcid.pdf}Eirik Keilegavlen}\footnotemark[1]\\
\email{eirik.keilegavlen@uib.no}
\and
Kundan Kumar\footnotemark[1]\\
\email{kundan.kumar@uib.no}
}
\begin{document}
\maketitle
\begin{abstract}
Numerical simulation of fracture contact poromechanics is essential for various applications, including CO\textsubscript{2} sequestration, geothermal energy production and underground gas storage. 
Modeling this problem accurately presents significant challenges due to the complex physics involved in strongly coupled poromechanics and frictional contact mechanics of fractures. The robustness and efficiency of the simulation heavily depends on a preconditioner for the linear solver, which addresses the Jacobian matrices arising from Newton’s method in fully implicit time-stepping schemes. Developing an effective preconditioner is difficult because it must decouple three interdependent subproblems: momentum balance, fluid mass balance, and contact mechanics. The challenge is further compounded by the saddle-point structure of the contact mechanics problem, a result of the Augmented Lagrange formulation, which hinders the direct application of the well-established fixed stress approximation to decouple the poromechanics subproblem. In this work, we propose a preconditioner 
that combines nested Schur complement approximations with a linear transformation, which addresses the singular nature of the contact mechanics subproblem.
This approach extends the fixed stress scheme to both the matrix and fracture subdomains. We investigate analytically how the contact mechanics subproblem affects the convergence of the proposed fixed stress-based iterative scheme and demonstrate how it can be translated into a practical preconditioner. The scalability and robustness of the method are validated through a series of numerical experiments.
\end{abstract}

\keywords{Poromechanics \and Linear solver \and Preconditioner \and Fixed stress \and Mixed-dimensional \and Contact mechanics}

\section{Introduction}

Fractures can significantly impact various subsurface operations, including
CO\textsubscript{2} sequestration \cite{FAN20191054,LIU20191,flemisch2024fluidflower}, geothermal energy production \cite{PAN201919,WEI2019120,ASAI2019763},
and underground gas storage \cite{zhou2019seismological,karev2019geomechanical,FIRME2019103006}, by influencing fluid flow and deformation processes within the subsurface. Fractures can serve as preferential pathways for fluid flow, impacting fluid pressure and transport in the surrounding rock matrix. In addition, changes in fluid pressure affect the mechanical stress of rock formations. This can cause frictional sliding and opening of fractures, threatening containment and increasing the risk of unacceptable levels of induced seismicity \cite{ellsworth2013injection}. 

To understand these processes, numerical simulations of fracture contact poromechanics, which can model slip and opening of fractures in deformable porous media, are necessary. However, accurate numerical modeling of fracture contact poromechanics poses a significant challenge due to the problem's geometric structure and complex physics involved. A model must account for fluid and friction forces on fracture surfaces and satisfy conditions for fracture sliding and opening.
Coulomb friction combined with the non-penetration condition, in particular, is notoriously difficult to model \cite{kikuchi1988contact,matrins1987,sofonea2012mathematical} because of its non-smooth transitions between regions of different fracture states: stick, slide, or open -- leading to changes in the governing constitutive laws. 
The challenge is further intensified by the coupling with 
fluid flow in the fractures and the Biot poromechanics model for the matrix surrounding the fractures \cite{coussy2004poromechanics}. This integrates fluid flow, matrix deformation and fracture contact mechanics, leading to a highly coupled and nonlinear model that is difficult to solve numerically.
To address these challenges, we follow the explicit fracture representation approach \cite{berre2019flow,porepy2024,berge_finite_2020}, which resolves the fractures and their intersections as lower-dimensional geometrical objects embedded in the computational domain. 
We reformulate the inequalities governing the contact mechanics relations as complementarity equations using an Augmented Lagrange method \cite{HUEBER20053147,wriggers2005}. The strong multiphysics couplings advocate for a fully implicit time discretization and applying Newton's method to solve the discretized problem. Consequently, the problem's complexity is propagated to the linear solver level, where an efficient and robust preconditioner for the fully coupled system is needed.
Both contact mechanics and poromechanics problems, when considered independently, have well-established methods for solving linear systems. The contact mechanics equations in the Augmented Lagrange formulation result in a saddle-point structure. 
Numerous linear solvers have been developed for saddle-point problem in various applications \cite{bacq_allatonce_2023,notay_new_2016,benzi_numerical_2005}, including the contact mechanics \cite{voet_internodes_2022,wiesner_algebraic_2021} and related to fractures in the subsurface \cite{franceschini_scalable_2022,franceschini_reverse_2022,franceschini_block_2019}.
Similarly, poromechanics has well-established preconditioning methods, primarily based on the fixed stress approximation \cite{white_block-partitioned_2016,mikelic_convergence_2013,kim_stability_2011,storvik_optimization_2018,BOTH2017101}, which utilizes idea that the volumetric hydro-mechanical stress changes slowly in time. 
However, integrating the fixed stress method into a fractured model is far from straightforward. The presence of fluid flow in both the fractures and the surrounding matrix requires specific treatment at the linearized level \cite{hu_effective_2023}. Furthermore, combining the fixed stress approach with contact mechanics is particularly challenging because many of the well-established saddle-point methods assume a specific structure of the matrix in hand: For a saddle-point matrix $\left[\begin{smallmatrix}
    A & B \\ C & D
\end{smallmatrix}\right]$ with a singular submatrix $D$, the submatrix $A$ needs to represent some atomic operator, e.g. a Laplacian, to make the analysis of the method feasible \cite{benzi_numerical_2005}. In our case, the operator $A$ encapsulates the coupled poromechanical problem, which limits analysis and hinders the performance of these methods.
One more challenge of designing a preconditioner is that it should be created with scalability in mind, ensuring that it can efficiently handle large-scale problems typical in real-world applications, such as reservoir simulations with large fracture networks. This requires the algorithm to maintain computational efficiency as the problem size grows, while also ensuring robustness 
for realistic parameters.

In this paper, we develop a robust and scalable linear preconditioner for frictional contact poromechanics.
To deal with possible singularities in the contact conditions, we apply a linear transformation to the linearized saddle-point problem before solving it, similar to \cite{bacq_allatonce_2023,notay_new_2016}.
This approach is combined with the fixed stress-based decoupling of the poromechanics equations based on \cite{girault_convergence_2016,white_block-partitioned_2016} with special treatment of degrees of freedom related to contact mechanics and fluid flow between fractures and the surrounding matrix, similar to \cite{hu_effective_2023}. Following this, we describe the practical algorithm for preconditioning the system, which decouples the problem into suitable blocks for an algebraic multigrid (AMG) method and ensures its scalability. Next, we study how the fixed stress approximation applies to our problem and provide convergence guarantees under certain simplifications.
We validate the preconditioner's performance through numerical experiments, demonstrating its robustness and scalability.

The structure of the manuscript is as follows: \Cref{sec:model_description} describes the governing equations, including constitutive laws, boundary and initial conditions of the mixed-dimensional poromechanical model with fractures. \Cref{sec:discretization} focuses on the discretization of the model.
\Cref{sec:preconditioner} describes the preconditioner algorithm, which relies on the fixed stress approximation. The suitability of this approximation is studied in \Cref{sec:convergence_study}, where we recast the preconditioner as a sequential iterative scheme, prove its contraction under certain assumptions and show how this result transfers to the preconditioner. In \Cref{sec:numerical_experiments}, we demonstrate the results of numerical experiments on the performance of the preconditioner. The concluding remarks are given in \Cref{sec:results}.
\section{Model description}
\label{sec:model_description}
The current section describes the mathematical model of mixed-dimensional flow and deformation of a poroelastic medium with frictional fracture deformation governed by contact poromechanics. The model has been introduced previously \cite{berge_finite_2020,porepy2021,porepy2024,stefansson_fully_2021}; this is therefore a brief review, following the presentation in \cite{porepy2024}. We first review the mixed-dimensional model geometry, before introducing the governing equations, which consist of i) conservation laws for fluid mass and momentum, ii) contact mechanics relations, specifically a non-penetration condition and the Coulomb friction law, iii) constitutive laws, and iv) boundary and initial conditions.
\subsection{Mixed-dimensional geometry}
\label{sec:mix_dim_geo}
The medium is described as a collection of subdomains $\Omega_i$ of different dimensions $d_i$, with $d_i \in \left\{0, ..., D \right\}$ and $D \in \left\{ 2, 3 \right\}$. In the case $D = 3$, the model can contain the following subdomains:
\begin{itemize}
    \item porous medium -- $3D$ subdomains;
    \item fractures -- $2D$ subdomains;
    \item intersections of fractures -- $1D$ subdomains;
    \item intersections of intersections -- $0D$ subdomains.
\end{itemize}
The width of the fracture $\Omega_i$ is characterized by its aperture $a_i$. To represent the spatial extension of $\Omega_i$ of dimension $d_i < D$ orthogonal to $\Omega_i$, we define the specific volume $\nu_i = a_i ^ {D - d_i}$. For fractures $\nu_i$ is equal to the aperture $a_i$, and for fracture intersections it is equal to $a_i^2$.
For consistency, we let $\nu_i=1$ if $d_i=D$.
The coupling between neighboring pairs of subdomains with a difference in dimension equal to one is facilitated by an interface $\Gamma_j$. Where relevant, we identify the higher-dimensional and lower-dimensional neighbor subdomains of an interface by subscripts $h$ and $l$, respectively. We write $\partial \Omega_i$ for the boundary of $\Omega_i$, while the internal part geometrically coinciding with the interface $\Gamma_j$ is $\partial_j \Omega_i \subseteq \partial \Omega_i$.
The projection operator of relevant quantities from subdomain $\Omega_i$ to interface $\Gamma_j$ is denoted by $\Pi^i_j$ and the reverse operator by $\Xi^i_j$.
We use the subscripts to identify the subdomain or interfaces, where the quantities are defined, and the superscripts $f$ and $s$ to denote fluid and solid quantities, respectively. However, we suppress subscripts and superscripts if the context allows.
\subsection{Conservation laws}
The following subsection presents the mass and momentum conservation laws for the relevant subdomains.
The fluid mass conservation equation for the subdomains $\Omega_i$ of dimension $d_i \in \left\{ 0, ..., D \right\}$:
\begin{equation}
    \label{eq:mass_conservation}
    \frac{\partial}{\partial t} \left( \nu_i \rho_i^f \varphi_i \right) - \nabla \cdot \left( \nu_i \rho_i^f \mathbf{v}_i \right) -\sum_{j \in \hat{S}_i} \Xi^i_j \left( \nu_j \rho_j^f v_j \right) = \psi_i,
\end{equation}
\noindent
where $\rho_i^f$ and $\rho_j^f$ are the fluid densities defined in a subdomain and on an interface, respectively; $\varphi_i$ is the porosity; $\mathbf{v}_i$ and $v_j$ are the volumetric fluid fluxes in a subdomain and on an interface, respectively; $\psi_i$ is a source or sink of fluid mass; $\nu_j \coloneq \Pi_i^h \nu_h$ is the interface specific volume; and the set $\hat{S}_i$ contains all interfaces to higher-dimensional neighbors of $\Omega_i$. Fluxes to lower-dimensional neighbors are treated as internal boundary conditions. 
The second term is void for $d_i = 0$, as 0D domains allow no internal flux, 
while the third term is void for $d_i = D$ as these subdomains have no higher-dimensional neighbors.
The quasi-static momentum conservation equation is imposed on the matrix subdomain ($d_i = D$):
\begin{equation}
    \label{eq:momentum_conservation}
    -\nabla \cdot \sigma_i  = \mathbf{F}_i,
\end{equation}
with $\sigma_i$ being the total stress tensor and $\mathbf{F}_i$ representing body forces. 
\subsection{Contact mechanics relations}
In the following subsection, we consider a matrix-fracture pair $\Omega_h$ and $\Omega_l$ of dimensions $d_h = D$ and $d_l = D - 1$, respectively. The two interfaces on each side of $\Omega_l$ are denoted $\Gamma_j$ and $\Gamma_k$. 
We define a fracture normal vector $\mathbf{n}_l$ to coincide with $\mathbf{n}_h$ on the $j$-side of the fracture and introduce the fracture contact 
traction ${\lambdafull}_l$, defined according to the direction of $\mathbf{n}_l$. 
For a generic 3D vector $\mathbf{\iota}$, the tangential and normal components are denoted 
\begin{equation}
    \mathbf{\iota}_\mathrm{n} = \mathbf{\iota} \cdot \mathbf{n}_l, \quad \mathbf{\iota}_\tau = \mathbf{\iota} - \mathbf{\iota}_\mathrm{n}.
\end{equation}
We also introduce the jump in interface displacements across $\Omega_l$:
\begin{equation}
    \ujump = \Xi^l_k \disp_k - \Xi^l_j \disp_j.
\end{equation}
where $\Xi$ is the subdomain-to-interface projection operator defined in \Cref{sec:mix_dim_geo}.
We achieve balance between the traction on the two fracture surfaces by enforcing each of them to equal the total fracture traction:
By Newton's third law, there is balance between the contact traction and the total traction on each of the fracture surfaces:
\begin{equation}
\begin{aligned}
\label{eq:intf_force_balance}
        \Pi^h_j \sigma_h \cdot \mathbf{n}_h & = \Pi^l_j \left( \lambdafull_l - p_l \mathbf{I} \cdot \mathbf{n}_l \right),\\
            -\Pi^h_k \sigma_h \cdot \mathbf{n}_h &= \Pi^l_k \left( \lambdafull_l - p_l \mathbf{I} \cdot \mathbf{n}_l \right),
\end{aligned}
\end{equation}
\noindent
where $\mathbf{I}$ denotes the identity matrix.
We enforce the non-penetration condition of the fracture's surfaces by imposing (the subscript $l$ is omitted for readability)
\begin{equation}
\label{eq:contact_normal_ineq}
   \ujumpn - g \geq 0;
   \quad
    \lambdan \leq 0;
    \quad
    \lambdan \left( \ujumpn - g \right) = 0,
\end{equation}
where $g$ is the normal gap between the fracture's surfaces when they are in mechanical contact. The second inequality reflects that compressive normal contact traction corresponds to negative $\lambdan$ by the definition of $\lambdafull$. 
The friction bound, $b$, is defined as a Coulomb-type friction law with a constant friction coefficient $F$:
\begin{equation}
\label{eq:friction_bound}
    b = -F \lambdan.
\end{equation}
With sliding, the velocity of the tangential displacement jump is denoted $\ujumpt$, and the friction model is given by:
\begin{equation}
\begin{aligned}
\label{eq:contact_tangential_ineq}
    \norm{\lambdat} &\leq b \\
    \norm{\lambdat} &< b \quad \rightarrow \quad \ujumpt = 0, \\
    \norm{\lambdat} &= b \quad \rightarrow \quad \exists \zeta \in \mathbb{R}^+ : \ujumpt = \zeta \lambdat
\end{aligned}
\end{equation}
The three relations state that: (i) tangential stresses are bounded, (ii) tangential deformation occurs only if the bound is reached and (iii) tangential stresses and deformation increments are parallel and co-directed.
The contact mechanics inequalities prescribe to each point of the fracture one of three states:
\begin{itemize}
    \item Sticking -- the fracture sides are in mechanical contact and not moving relative to each other; the traction $\lambdafull$ is nonzero.
    \item Sliding -- the fracture sides are in mechanical contact with the tangential contact force $\lambdat$ having reached the friction bound, and the fracture sides are moving relative to each other.
    \item Open -- the fracture sides are separated from each other, no mechanical contact occurs; the traction $\lambdafull$ is zero.
\end{itemize}
The inequalities \cref{eq:contact_normal_ineq,eq:contact_tangential_ineq} can be reformulated as equalities $C_\text{n} = 0$ and $C_\tau = 0$ based on complementarity functions according to the augmented Lagrangian formulation, see \cite{HUEBER20053147,stefansson_fully_2021,berge_finite_2020}:
\begin{subequations}
\label{eq:contact_equalities}
    \begin{equation}
        C_\text{n}\coloneq \lambdan + \text{ max}\left\{ 0, -\lambdan - c (\ujumpn - g)  \right\};
    \end{equation}
    \begin{equation}
        C_\tau \coloneq \left( -\lambdat \text{max}\left\{ b, \norm{\lambdat + c\ujumpt} \right\} + \text{max}\left\{ b, 0 \right\} \left(\lambdat + c \ujumpt \right) \right) \left(1 - \chi \right) + \chi \lambdat,
    \end{equation}
\end{subequations}
where $c>0$ is a  numerical parameter, and $\chi$ is a characteristic function: $\chi = 1 \text{ if } |b| < \varepsilon$ and $\chi = 0 \text{ otherwise}$;
$\varepsilon$ is a numerical parameter that defines the fracture as open even for small positive values of the friction bound. This can reduce the sensitivity of the nonlinear solver to numerical errors and thereby improve solver convergence.

\subsection{Constitutive relations}
\label{sec:constitutive_laws}

The constitutive relations provided below are in large part based on \cite{coussy2004poromechanics}.
The volumetric fluid flux is modeled using Darcy's law:
\begin{equation}
\label{eq:darcy}
    \mathbf{v}_i = - \frac{\perm_i}{\visc} \left( \nabla p_i - \rho^f_i \gravity \right),
\end{equation}
where $\perm_i$ is the permeability tensor, $\visc$ denotes fluid viscosity and density, and $\gravity$ is the gravity vector. We assume $\perm$ is constant in the matrix, whereas the fracture permeability is given by \cite{zimmerman1996hydraulic}:
\begin{equation}
\label{eq:fracture_permeability}
    \perm_i = \frac{a_i^2}{12}\mathbf{I}, \quad d_i = D - 1.
\end{equation}
Note that $a_i$ depends on $\ujump$ as detailed in Equation \eqref{eq:aperture}. The permeability of intersections is computed as the average of the permeabilities in the intersecting fractures.
The solid density is constant, while the fluid density is given by:
\begin{equation}
    \rho^f = \rhoRef \exp{\left(c_f (p - p^0)\right)},
\end{equation}
where $c_f$ denotes compressibility, $\rhoRef$ and $p^0$ denote the reference density and pressure, respectively. The viscosity is constant.
The volumetric interface flux is proportional to the pressure jump across $\Gamma_j$ via a Darcy-type law \cite{Martin2005}:
\begin{equation}
\label{eq:interface_flow}
    v_j = - \frac{\mathcal{K}_j}{\mu_{f,j}} \left[ \frac{2}{\Pi^l_j a_l} \left( \Pi^l_j p_l - \Pi^h_j p_h \right) -\mathbf{g} \cdot \mathbf{n}_h \rho_j^f \right],
\end{equation}
with $\mathcal{K}_j$ and $\mu_{f,j}$ denoting the interface permeability and viscosity inherited from the lower-dimensional neighbor subdomain. The factor $\frac{2}{\Pi^l_j a_l}$ represents half the normal distance across the fracture. For an advective quantity $\xi_j$ representing $\rho_j$ and $\mu_{f,j}$, we use an inter-dimensional upwinding based on $v_j$:
\begin{equation}
    \xi_j = \begin{cases}
        \Pi^h_j \xi_h \quad &\text{if } v_j > 0, \\
        \Pi^l_j \xi_l \quad &\text{otherwise}.
    \end{cases}
\end{equation}
The total poromechanical stress tensor is given by Hooke's law and accounts for fluid pressure:
\begin{equation}
\label{eq:poromech_stress}
    \sigma_i = \sigma^0_i + G \left( \nabla \disp_i + \nabla 
    \disp_i^T \right)
    + \Lame 
    \text{tr}(\nabla \disp_i) 
    \mathbf{I} - \alpha \left( p_i - p^0_i \right) \mathbf{I},
\end{equation}
where $G$ and $\Lame$ are the Lame coefficients, $\alpha$ is the Biot coefficient and $\text{tr}(\cdot)$ denotes the trace of a matrix.
The matrix porosity depends on pressure and displacement according to \cite{coussy2004poromechanics}:
\begin{equation}
    \varphi = \varphi^0 + \alpha \nabla \cdot \mathbf{u} + \frac{(\alpha - \varphi^0)(1 - \alpha)}{\Lame + \frac{2}{3} G} \left( p - p^0 \right),
\end{equation}
We set unitary fracture and intersection porosity.
Fracture roughness effects are incorporated through the gap function:
\begin{equation}
\label{eq:gap_function}
    g = g^0 + \frac{\Delta u_\text{max} {\lambdan}}{\Delta u_\text{max} K_n - {\lambdan}} + \norm{\disp_\tau} \tan \theta,
\end{equation}
where $g^0$ is the steady-state gap, $\Delta u_\text{max}$ is the maximum normal fracture closure, $K_n$ is the fracture normal stiffness and $\theta$ is the shear dilation angle. The second term of the expression corresponds to the normal elastic deformation of the fracture according to the Barton-Bandis model \cite{barton_strength_1985}. The third term corresponds to the shear dilation. 

The aperture of fracture subdomains is defined as follows:
\begin{equation}
\label{eq:aperture}
    a = a^0 + \ujumpn.
\end{equation}
Here, $a^0$ denotes a residual hydraulic aperture. In intersection subdomains (i.e., $d_i < D - 1$), we compute $a_i$ from the mean aperture of higher-dimensional neighbors:
\begin{equation}
    a_i = \frac{1}{|\hat{S}_i|}\sum_{j \in \hat{S}_i} \Xi^i_j \Pi^h_j a_h.
\end{equation}
\subsection{Initial and boundary conditions}
To close the system of equations, we provide the initial conditions, as well as boundary conditions on the internal and external boundaries of each subdomain. On internal boundaries $\partial_j \Omega_i$, we require continuity of normal mass fluxes and displacements (for $d_i = D$):
\begin{equation}
    \nu_i \rho_i^f \mathbf{v}_i \cdot \vec{n}_h = \Xi^i_j \nu_j \rho^f_j v_j;
    \quad
    \mathbf{u}_i = \Xi^i_j \mathbf{u_j}.
\end{equation}
On immersed fracture tips, we require the mass flux to equate to zero. On external boundaries, Dirichlet or Neumann conditions are set for the mass and momentum conservation equations.

\section{Discretization}
\label{sec:discretization}
As the discretization of the governing equations is not a main point of this work, we briefly summarize the main ingredients; for more information see \cite{porepy2021,porepy2024}.
The computational simplex grid is constructed to conform to immersed lower-dimensional geometrical objects (fractures and intersections) in the sense that each lower-dimensional subdomain coincides geometrically with a set of faces on the surrounding higher-dimensional grid.
The spatial discretization is based on finite volume methods. The fluid fluxes are discretized using the multi-point flux approximation method \cite{Aavatsmark2002}, whereas the 
poromechanical stress is discretized using the multi-point stress approximation (MPSA) method \cite{Nordbotten2016}. Advective terms are discretized by standard upwinding in single-phase flow. The implicit Euler scheme is used for temporal discretization. 
The nonlinear system, which consists of discretized versions of \cref{eq:mass_conservation,eq:momentum_conservation,eq:interface_flow,eq:intf_force_balance,eq:contact_equalities}, is only semi-smooth owing to the contact mechanics equations.
We solve the system using a semi-smooth Newton method \cite{ito2003semi}.
The core of the method is to invert the generalized Jacobian matrix, which matches the original Jacobian in the smooth regions and takes the derivative value from one or the other side in the non-smooth regions \cite{ito2003semi,berge_finite_2020}. We will refer to it as just the Jacobian, $J$.
\section{Preconditioner for mixed-dimensional contact poromechanics}
\label{sec:preconditioner}
We start constructing a preconditioning algorithm to address linear systems that arise in linearization of \cref{eq:mass_conservation,eq:momentum_conservation,eq:interface_flow,eq:intf_force_balance,eq:contact_equalities} within the semi-smooth Newton's method steps.
To begin with, we recall block elimination procedure based on the block $\mathcal{LDU}$ decomposition of a $2\times2$ block matrix:
\begin{equation}
J=
    \begin{bmatrix}
        A & B \\ C & D
    \end{bmatrix}
    = \mathcal{L D  U} =
    \begin{bmatrix}
        I              &  \\
        C  A^{-1} & I
    \end{bmatrix}
    \begin{bmatrix}
        A & \\
          & S_A
    \end{bmatrix}
    \begin{bmatrix}
        I & A^{-1} B \\
          & I
    \end{bmatrix},
\end{equation}
where $S_A$ is the Schur complement: $S_A \coloneq D - C A^{-1} B$.
Using the upper-triangular matrix $(\mathcal{DU})^{-1}$ as a right preconditioner, we get 
$\mathcal{LDU}(\mathcal{DU})^{-1}=\mathcal{L}$. As the diagonal elements of $\mathcal{L}$ all equal one, so do its eigenvalues, thus a Krylov subspace method combined with this preconditioner will converge rapidly. In practice, a sparse approximation of the Schur complement is needed, and the linear systems based on $A$ and $S_A$ are solved inexactly.

The choice of the Schur complement approximation and the inexact subsolvers depends on the underlying problem. For matrices with more than two block rows and columns, a recursive $\mathcal{LDU}$ decomposition applies, and the corresponding block-triangular preconditioner is a suitable option. In this case, nested Schur complements must be approximated. For our problem, the Jacobian, schematically represented in \cref{fig:elimination_order} (left), has the following features: The contact mechanics submatrix is singular and corresponds to the saddle-point structure of the matrix. It is strongly coupled with the elasticity submatrix, which, in turn, is strongly coupled with the fluid mass balance submatrix. 
In the fracture-less poromechanics, where the matrices $A$ and $D$ correspond to the subproblems of momentum and mass balance, respectively, 
it is common to use the fixed stress approximation for the Schur complement reduction, often combined with an AMG method to solve the subproblems \cite{white_block-partitioned_2016}.
Unfortunately, this approach cannot readily be extended to poromechanics with fractures as the inclusion of fracture contact mechanics (for the momentum conservation equation) and fluid flow between subdomains (for the mass balance equation) may reduce the ellipticity of the corresponding subproblems, and thereby jeopardize AMG performance.

This section presents an extension of the fixed stress approximation based on a nested Schur complement decomposition. The algorithm consists of two stages: preprocessing and application. In the preprocessing stage, a right linear transformation is applied to the Jacobian to prevent the contact mechanics submatrix from being singular; the details follow in \Cref{sec:linear_transformation}. The application stage consists of three nested Schur complement decompositions, $S^{i}$ with the following structure:
\begin{enumerate}
    \item $S^1$ decouples the contact mechanics equations according to \Cref{sec:eliminating_contact_mechanics};
    \item $S^2$ applies a fixed stress-based approximation to decouple the momentum balance equation according to \Cref{sec:eliminating_force_balance};
    \item $S^3$ decouples the interface fluid flow according to \Cref{sec:eliminating_intf_flow}.
\end{enumerate}
\cref{fig:elimination_order} illustrates the algorithm and the indexing scheme used to represent submatrices of the Jacobian, which we will refer to as $J_{ij}$ where $i, j \in \{1, 2, 3, 4, 5\}$, e.g. $J_{11}$ corresponds to the contact mechanics submatrix.

\begin{figure}[htbp]
\label{fig:elimination_order}
    \centering
    \includegraphics[width=1\linewidth]{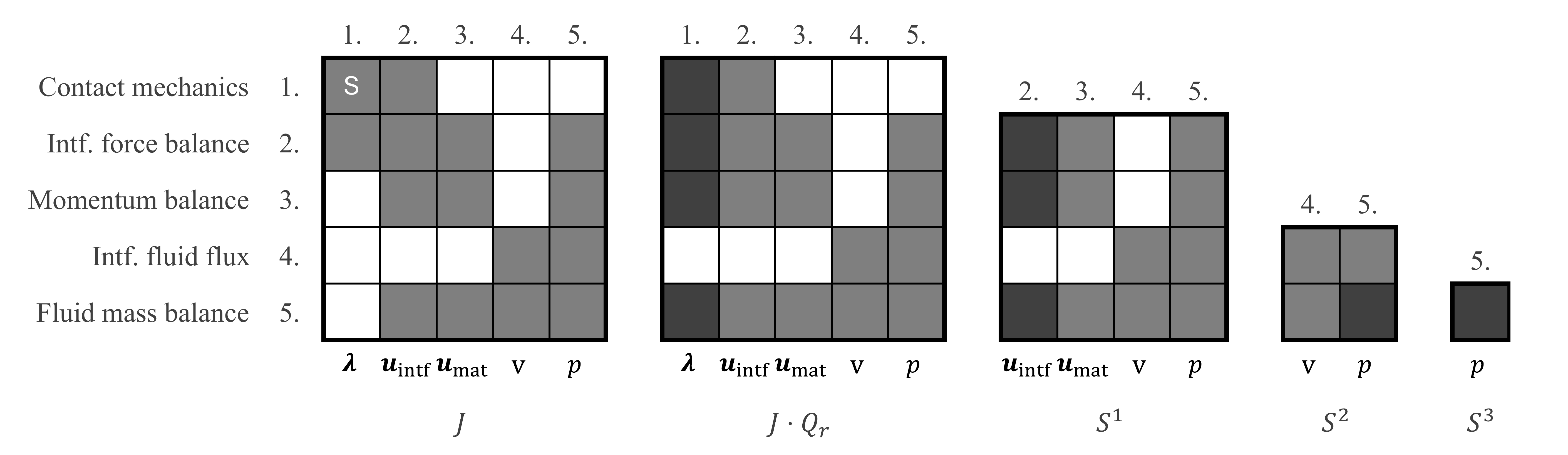}
    \caption{The block structure of the matrices on different stages of the preconditioner algorithm. White cells represent empty submatrices, light-gray cells represent non-empty submatrices of the original Jacobian. Dark-gray cells represent modified submatrices. The ``S'' letter corresponds to the singular submatrix. From left to right: 1 -- the original Jacobian, 2 -- the Jacobian after the linear transformation (see \Cref{sec:linear_transformation}), 3 -- the first-level Schur complement (see \Cref{sec:eliminating_contact_mechanics}), 4 -- the second-level Schur complement (see \Cref{sec:eliminating_force_balance}), 5 -- the third-level Schur complement (see \Cref{sec:eliminating_intf_flow}).
    }
\end{figure}

\subsection{Preprocessing}
\label{sec:linear_transformation}
$J_{11}$ consists of block-diagonal submatrices of size $D \times D$, each of which corresponds to a specific fracture cell.
Unfortunately, the submatrices that correspond to the sticking fracture state are singular, as can be seen from \cref{eq:contact_normal_ineq,eq:contact_tangential_ineq}, where the contact relations do not depend on $\lambdat$. 
To make the submatrix $J_{11}$ invertible, we note that the contact mechanics equations are coupled only with the interface force balance equation; the Jacobian matrix can be represented as 
\begin{equation}
\label{eq:jac_contact}
    J = \begin{bmatrix}
        J_{11} & J_{12} & 0 \\
        J_{21} & J_{22} & J_{2 \star} \\
        0               & J_{\star 2} & J_{\star \star} \\
    \end{bmatrix},
\end{equation}
where the symbol $\star$ incorporates the remainder, the submatrices with the indices $i, j \in \{3, 4, 5\}$.
This motivates a linear transformation matrix, inspired by  \cite{notay_new_2016,bacq_allatonce_2023}:
\begin{equation}
    \label{eq:Qright}
    Q_r = \begin{bmatrix}
        I_{11}& 0& 0& \\
        -{D_{22}}^{-1}  J_{21}& I_{22}& 0& \\
        0& 0& I_{\star \star}& \\
    \end{bmatrix},
\end{equation}
where $I_{ii}$ is an identity matrix of a shape that match to block structure of the Jacobian and
the subscript $r$ is used to indicate that $Q_r$ is applied from the right side.
${D_{22}}$ is a block-diagonal approximation of the submatrix $J_{22}$, with block size $D$ (the spatial dimension), hence 
${D_{22}}$ can be inexpensively inverted.
We apply $Q_r$ to the original matrix \cref{eq:jac_contact} and obtain the linearly-transformed system: $J Q_r  Q_r^{-1} x = r$,

where $x$ and $r$ are the unknown and the right-hand side vectors, respectively. We denote $\tilde{J} \coloneq J  Q_r$ and $\tilde{x} \coloneq Q_r^{-1} x$ and solve the transformed linear system $\tilde{J}  \tilde{x} = r$, the transformed matrix is shown second from the left on \cref{fig:elimination_order}. Note that the transformation is applied only once, in the preprocessing phase. 
The linear transformation changes the sparsity structure of the Jacobian to
\begin{equation}
\label{eq:J_tilde}
    \tilde{J} = \left[\begin{array}{c;{4pt/2pt}cc}
        J_{11} - J_{12}  D_{22}^{-1}  J_{21} & J_{12} & 0 \\   \hdashline[4pt/2pt]
        E_{21} & J_{22} & J_{2 \star} \\
        -J_{\star 2}  D_{22}^{-1}  J_{21} & J_{\star 2} & J_{\star \star}
    \end{array}\right],
\end{equation}
where $E_{21} \coloneq \left( I_{22} - J_{22}  D_{22}^{-1} \right)  J_{21}$ is an error matrix caused by the approximated inverse of $J_{22}$.

We also experimented with using a left transformation matrix $Q_l$ applied to the system $Q_l  J  x = Q_l  r$, with no notable difference in performance and convergence.
\subsection{First-level Schur complement: Eliminating contact mechanics}
\label{sec:eliminating_contact_mechanics}
The submatrix $\tilde{J}_{11} \coloneq J_{11} - J_{12}  D_{22}^{-1}  J_{21}$ is non-singular,  consisting of $D\times D$ block-diagonal submatrices, reflecting the local character of the contact conditions. Exact inversion of $\tilde{J}_{11}$ yields the first-level Schur complement
\begin{equation}
\label{eq:S^1_*}
    S^{1} = \begin{bmatrix}
        J_{22} + E_{22} & J_{2 \star} \\
        J_{\star 2} - \tilde{J}_{\star 1}  \tilde{J}_{11}^{-1}  J_{12} & J_{\star \star}\\
    \end{bmatrix},
\end{equation}
where $E_{22} \coloneq - E_{21} \tilde{J}_{11}^{-1} J_{12}$. $S^1$ is depicted as matrix 3 in \cref{fig:elimination_order}.
Note that $J_{\star \star}$ remains unmodified.
We observe that the linear transformation $Q_r$ and the efficient and inexpensive elimination of the contact mechanics degrees of freedom is applicable to any discretization of the contact (poro)mechanics problem which adopts Lagrange multiplier-based approaches. 
\subsection{Second-level Schur complement: Eliminating the momentum balance equations}
\label{sec:eliminating_force_balance}
Having eliminated the contact relations, we now turn to the elimination of the momentum balance equations, which is handled by the fixed stress scheme.
The first-level Schur complement \eqref{eq:S^1_*} can be rewritten as follows:
\begin{equation}
\label{eq:S^1}
    S^1 = \left[\begin{array}{cc;{4pt/2pt}cc}
        S^1_{22} & J_{23} &        & J_{25} \\
        S^1_{32} & J_{33} &        & J_{35} \\ \hdashline[4pt/2pt]
                 &        & J_{44} & J_{45} \\
        S^1_{52} & J_{53} & J_{54} & J_{55} \\
    \end{array}\right],
\end{equation}
where $S^1_{ij}$ denotes the corresponding part of $S^1$, specifically, $S^1_{22} = J_{22} + E_{22}$ and $S^1_{i2} = J_{i2} - \tilde{J}_{i1} \cdot \tilde{J}_{11}^{-1} \cdot J_{12}$, $i \in \{ 3, 4, 5\}$. 
It can be seen from \eqref{eq:S^1} that the nontrivial Schur complement appears only for $J_{55}$, as there is no coupling between mechanics and the interface fluid flow.
The coupled operator of the momentum balance and the interface force balance equation, which corresponds to indices \{2, 3\} is solved monolithically with AMG.
We utilize the fixed stress-based approximation to construct the second-level Schur complement $S^2_{55} = J_{55} + D_{55}$,
where $D_{55}$ is a diagonal matrix of the corresponding shape. Since the mass balance equation is defined in the ambient dimension, the fractures, and lower-dimensional geometrical subdomains, the coefficients of $D_{55}$ are different for the corresponding degrees of freedom. For the ambient dimension and for fracture subdomains, the following coefficients $l_\text{mat}$ and $l_\text{frac}$ are used, respectively:
\begin{equation}
    \label{eq:fixed_stress_coefs}
    l_\text{mat} = \dfrac{\alpha^2}{2 G / D + \Lame};
    \quad
    l_\text{frac} = \dfrac{\ujumpn \alpha^2 c_f}{\Lame (1/M + \varphi c_f)},
\end{equation}
where $\dfrac{1}{M} = \dfrac{\partial \varphi}{\partial p} = \dfrac{(\alpha - \varphi^0)(1 - \alpha)}{\Lame + \frac{2}{3}G}$. The first is the classical fixed stress coefficient \cite{mikelic_convergence_2013,kim_stability_2011,white_block-partitioned_2016} and the latter is derived in \cite{girault_convergence_2016} for the fractured poromechanics problem without contact mechanics. Its applicability to fractured contact poromechanics is analyzed in the next section.
Since the intersection subdomains are not coupled with the momentum balance equation, no stabilization is needed for them.
An alternative, fully algebraic, stabilization for fracture-related degrees of freedom is studied in \cite{franceschini_scalable_2022}.
In our experience, the two approaches yield insignificant difference in solver performance,
though the algebraic approach requires more time to construct and occupies additional non-diagonal entries of the matrix.

\subsection{Third-level Schur complement: Eliminating the interface flow}
\label{sec:eliminating_intf_flow}
The second-level Schur complement is expressed as follows:
\begin{equation}
\label{eq:S^2}
    S^2 = \left[\begin{array}{c;{4pt/2pt}c}
         J_{44}& J_{45}  \\ \hdashline[4pt/2pt]
         J_{54}& S^2_{55} 
    \end{array}\right].
\end{equation}
As shown in \cite{hu_effective_2023}, the interface fluid flow submatrix $J_{44}$ is diagonally dominant and $J_{44}$ can be approximated by its diagonal to yield the third-level Schur complement: 
\begin{equation}
    \label{eq:S^3}
    S^3 = S^2_{55} - J_{54} \cdot \text{diag}(J_{44})^{-1} \cdot J_{45}.
\end{equation}
To solve the linear systems based on $J_{44}$, we apply the ILU(0) algorithm.
The obtained third-level Schur complement contains the fluid mass balance degrees of freedom and is solved monolithically with AMG.
\subsection{Preconditioner algorithm summary}
We provide \cref{algorithm:construction,algorithm:application} of constructing and applying the preconditioner, where $\text{bdiag}(\cdot)$ denotes the $2\times2$ or $3\times3$ block diagonal approximation, and AMG$(\cdot, \cdot)$ and ILU0$(\cdot, \cdot)$ denote the application of the corresponding algorithms.
\begin{algorithm}[htbp]
\caption{Preconditioner $P$ construction.}
\textbf{Input:} $J_{ij}$, $i, j \in \{1,2,3,4,5\}$;
\begin{algorithmic}[1]
    \label{algorithm:construction}
    \State $D_{22}^{-1} = \text{bdiag}(J_{22})^{-1}$;
    \State Assemble $Q_r$ using $D_{22}^{-1}$ according to \cref{eq:Qright};
    \State $\tilde{J} = J \cdot Q_r$;
    \State $S^1_{22} = J_{22} - \tilde{J}_{21} \cdot \tilde{J}_{11}^{-1} \cdot J_{12}$;
    \State $S^1_{32} = J_{32} - \tilde{J}_{31} \cdot \tilde{J}_{11}^{-1} \cdot J_{12}$;
    \State Assemble the fixed stress submatrix $D_{55}$  according to \Cref{sec:eliminating_force_balance};
    \State $S^2_{55} = J_{55} + D_{55}$
    \State $S^3_{55} = S^2_{55} - J_{54} \cdot \text{diag}(J_{44})^{-1} \cdot J_{45}$.
\end{algorithmic}
\end{algorithm}
\begin{algorithm}[htbp]
\caption{Preconditioner $P$ application.}
\textbf{Input:} $\vec{w} = [\vec{w}_1, \vec{w}_2, \vec{w}_3, \vec{w}_4, \vec{w}_5]$;
\begin{algorithmic}[1]
    \label{algorithm:application}
    \State $\vec{v_5} = \text{AMG}(S_{55}^3, \vec{w}_5)$;
    \Comment{Fluid mass balance}
    \State $\vec{v_4} = \text{ILU0}(J_{44}, \vec{w_4} - J_{45} \cdot \vec{v_5})$;
    \Comment{Interface fluid flow}
    \State $
    \begin{bmatrix}
        \vec{v_2} \\ \vec{v_3} 
    \end{bmatrix} = \text{AMG}(
    \begin{bmatrix}
        S_{22}^1 & J_{23} \\ S_{32}^1 & J_{33}
    \end{bmatrix},
    \begin{bmatrix}
        \vec{w}_2 - J_{25} \cdot \vec{v}_5 \\ \vec{w}_3 - J_{35} \cdot \vec{v}_5
    \end{bmatrix}$);
    \Comment{Momentum and interface force}
    \State $\vec{v}_1 = \tilde{J}_{11}^{-1} \cdot (\vec{w}_1 - J_{12} \cdot \vec{v}_2)$; \Comment{Contact mechanics}
\end{algorithmic}
\textbf{Output:} $[\vec{v}_1, \vec{v}_2, \vec{v}_3, \vec{v}_4, \vec{v}_5]$.
\end{algorithm}
The proposed preconditioner approximates the upper triangular matrix of the block $\mathcal{LDU}$ decomposition of the linearly transformed Jacobian. In exact arithmetic, the GMRES method with this preconditioner should converge in two iterations. In practice, the quality of the preconditioner depends on the accuracy of the performance of the AMG subsolvers for the elasticity and the fluid mass balance submatrices, and on the performance of the ILU subsolver for the interface fluid flow. Also important is the quality of the matrix inverse approximations to build the corresponding Schur complements, namely, the block-diagonal inverse of $J_{44}$ and the fixed stress approximation. Applicability of the latter for the fractured contact poromechanics problem and for the linearly transformed Jacobian is addressed in the next section.

\section{Analysis of fixed stress stabilization for mixed-dimensional contact poromechanics}
\label{sec:convergence_study}
While the practical performance of the preconditioner developed in the previous section must be established by numerical experiments, it is of interest to first give a theoretical justification of its properties.
To that end, we study the sequential coupling between the mechanics and flow problems, and derive necessary conditions for the contraction properties, hence convergence, of the coupling scheme.
This approach has previously been applied to study the fixed stress approach for non-fractured porous media, e.g. \cite{mikelic_convergence_2013,storvik_optimization_2018}, where the theoretically derived conditions describe well also the practical performance of the scheme.
An analysis for a fixed stress-like scheme for fractured porous media, though with a model that allowed for interpenetration of the fracture surfaces and without friction,
was presented in \cite{girault_convergence_2016}. The results presented below can be considered an extension of that work.
Our analysis is carried out on the continuous level, but we discuss the connection between the sequential scheme and the preconditioner developed in \Cref{sec:preconditioner} towards the end of this section.
To make the analysis feasible, we introduce the following simplifications of the model introduced in \Cref{sec:model_description}:

the permeability in fractures and matrix are taken as constant. 
The reference values of stress, pressure, and displacement are taken as zero.
The fracture contact state is considered known due to linearization but it may vary in space.
Finally, we consider a domain with a single fracture.

The steps for the contraction proof are as follows \cite{girault_convergence_2016}: We first introduce the sequential coupling scheme in a weak formulation. For this scheme, we take the difference of two successive iterations of the scheme and obtain energy estimates for the flow model including the mass balance and Darcy flux equations. Next, we obtain estimates for the momentum balance equation taking into the account the contact mechanics term. After that, we consider the linearized contact term and estimate it in terms of displacement. Finally, we combine the above estimates, demonstrate the contraction and show how the contact term affects the convergence condition.

\subsection{Fixed stress sequential iterative scheme}
We consider the domain which includes a single fracture and the ambient dimension $\Omega = \Omega_h \cup \Omega_l$. It is convenient to introduce an auxiliary partition of $\Omega$ into two non-overlapping subdomains $\Omega^+$ and $\Omega^-$. We introduce the virtually extended fracture $\Omega_l^\text{ext}$, which is a Lipschitz surface containing the original fracture: $\Omega_l \subset \Omega_l^\text{ext}$. To simplify the discussion, we use a superscript $\star$ to denote either $+$ or $-$. $\Omega^\star$ is adjacent to $\Cfrac^\star$. For any function $f$ defined in $\Omega$, we extend the star notation to $\Omega^\star$ and set $f^\star = f_{\vert \Omega^\star}$.
Let ${\Omega_l^\text{ext}}^\star = \partial \Omega^\star \backslash \Omega_l^\text{ext}$.
Let $W = H^1(\Omega^+ \cup \Omega^-)$ with the norm $\norm{v}^2_W = \norm{v^+}^2_{H^1(\Omega^+)} + \norm{v^-}^2_{H^1(\Omega^-)}$.
The space for the displacement is $L^\infty (0, T; V)$, where $V$ is a closed subspace of $H^1(\Omega_h)^D$ with the corresponding norm:
\begin{equation}
V = \{ \vec{v} \in W^D; \jump{\vec{v}}_{{\Omega_l^\text{ext}} \backslash \Cfrac} = \vec{0}, \vec{v}^\star_{\vert {\Omega_l^\text{ext}}^\star} = 0, \star= +, - \};
\quad
\norm{\vec{v}}_V^2 = \left( \Sigma^D_{i=1} \norm{v_i}^2_W \right),
\end{equation}
the subscript $D$ refers to the ambient dimension (2 or 3).

The space for the Darcy velocity in the matrix is:
\begin{equation}
    \mathbf{Z} = \left\{
    \vec{q} \in \Hdiv{\Omega^+ \cup \Omega^-}; \quad \jump{\vec{q}} \cdot \vec{n}^+ = 0 \text{ on } {\Omega_l^\text{ext}} \backslash \Cfrac, \quad \vec{q} \cdot \vec{n} = 0 \text{ on } \partial \Omega
    \right\},
\end{equation}
with norm given by:

\begin{equation}
    \norm{\vec{q}}_\mathbf{Z}^2 = 
    \norm{\vec{q}}^2_\Hdiv{\Omega^+} + \norm{\vec{q}}^2_\Hdiv{\Omega^-}.
\end{equation}
The space for the Darcy velocity in the fracture is:
\begin{equation}
    \mathbf{Z}_\Cfrac = \left\{
    \vec{q}_c \in L^2(\Cfrac)^{D - 1}; \overline{\nabla} \cdot \vec{q}_c \in H^{-\frac{1}{2}}(\Cfrac)
    \right\},
\end{equation}
where $\overline{\nabla} \cdot$ is the surface gradient on the fracture.
The corresponding norm is defined by:
\begin{equation}
    \norm{\vec{q}_c}_{\mathbf{Z}_\Cfrac}^2 = 
    \norm{\vec{q}_c}^2_{L^2(\Cfrac)}
    +
    \norm{\overline{\nabla} \cdot \vec{q}_c}^2_{ H^{-\frac{1}{2}}(\Cfrac)}.
\end{equation}

We denote the scalar products according to the norms in $\Omega_h$ and $\Cfrac$ by $(\cdot, \cdot)$ and $(\cdot, \cdot)_\Cfrac$, respectively.

The space for the contact traction $\lambdafull$ is $H^{-\frac{1}{2}}(\Cfrac^{D})$ in the continuous setting. In the discrete formulation, its regularity is improved so that $\lambdafull \in L^2(\Cfrac^D)$. In this study, we assume that the discretized problem is well-posed with a unique solution and focus on the convergence of the fixed stress splitting scheme.
We define auxiliary coefficients that will appear later in the fixed stress scheme:
\begin{equation}
    \beta = \dfrac{1}{M \alpha^2} + \dfrac{c_f}{\alpha^2}\varphi_0 + \dfrac{1}{\Lame};
    \quad 
    \chi = \left\{ \dfrac{c_{fc}}{\Lame^2 \beta - \Lame}\right\}^\frac{1}{2};
\end{equation}
\begin{equation}
    \label{eq:aux_coefs_gamma_c}
    c_{fc} = a_0 c_f;
    \quad
    \gamma_c = \dfrac{c_{fc}}{\Lame \beta - 1};
    \quad 
    \beta_c = c_{fc} + \gamma_c,
\end{equation}
where $\varphi_0$ and $a_0$ are the reference porosity and aperture, respectively, and $1/M$ is defined under \cref{eq:fixed_stress_coefs}.
The following denotes the volumetric mean stress in the matrix:
\begin{equation}
    \sigma_v \coloneq \sigma_{v,0} + \Lame \nabla \cdot \disp - \alpha \left( p - p_0\right),
\end{equation}
where $\sigma_{v,0}$ is the initial volumetric stress. As $\sigma_{v,0}$ and $p_0$ are constants in time, we have:
$\partial_t \sigma_v = \alpha \partial_t p - \Lame \nabla \cdot \partial_t \disp$.
The analogous term is defined in fractures: $\chi \partial_t \sigma_f = \gamma_c \partial_t p_c - \partial_t \ujumpn$. 
The Augmented Lagrange contact mechanics \cref{eq:contact_equalities} can be reformulated as follows:
\begin{subequations}
\label{eq:contact_equalities_for_fs}
    \begin{align}
    C_\text{n} = \begin{cases}
    - c (\ujumpn - g) \quad &\text{for } \lambdan + c (\ujumpn - g) < 0 \\
    \lambdan \quad &\text{otherwise}
    \end{cases},
    \end{align}
    \begin{align}
    C_\tau = \begin{cases}
        c b \ujumpt \quad &\text{for } b > \modulus{\ujumpt + c \lambdat} \\
        b (\lambdat + c \ujumpt) - \lambdat \modulus{\lambdat + c \ujumpt} \quad &\text{for } 0 < b \leq \modulus{\ujumpt + c \lambdat} \\
        \lambdat \quad &\text{for } b \leq 0 
    \end{cases}.
\end{align}
\end{subequations}
The fixed stress sequential iterative scheme addresses the linearized problem which arises within Newton's method iterations. With an abuse of notation we use the same letters to denote the finite increments of the original variables, which become the unknowns of the linearized problem:
$\disp \in L^\infty(0, T; V)$, 
$p \in L^\infty(0, T; L^2(\Omega))$, 
$p_c \in L^\infty(0, T; H^\frac{1}{2}(\Cfrac))$,
$\vec{z} \in L^2(0, T; \mathbf{Z})$,
$\vec{\zeta} \in L^2(0, T; \mathbf{Z}_\Cfrac)$
and $\lambdafull \in L^2(0, T; L^2(\Cfrac^D))$.
The fixed stress iterative scheme reads:

\noindent
\textbf{Step A}: Given $\disp^k$ and $\lambdafull^{k}$, solve for $p^{k+1}, \vec{z}^{k+1}, p_c^{k+1}, \vec{\zeta}^{k+1}$:
\begin{subequations}
\begin{multline}
\label{eq:mass_balance_fs}
\forall \theta \in L^2(\Omega), \quad
\left(
\partial_t \left(
(\dfrac{1}{M} + c_f \varphi_0 + \dfrac{\alpha^2}{\Lame} )  p^{k+1} 
+ \alpha \nabla \cdot  \disp^{k}
\right), \theta
\right) \\
+ \left(
\nabla \cdot  \vec{z}^{k+1}, \theta
\right)
= \left(
-\dfrac{\alpha}{\Lame} \partial_t  \sigma_v^k,
\theta
\right)
+ \left(
\text{rhs}, \theta
\right),
\end{multline}
\begin{equation}
\label{eq:flux_fs}
\forall \vec{q} \in \mathbf{Z}, \quad
\left(
\visc \mathbf{K}^{-1}  \vec{z}^{k+1}, \vec{q}
\right) 
= ( p^{k+1}, \nabla \cdot \vec{q})
- \left(
 p_c^{k+1}, \jump{\vec{q}}_C \cdot \vec{n}^+
\right)_C
+ \left(
\text{rhs}, \vec{q}
\right),
\end{equation}
\begin{multline}
\label{eq:mass_balance_frac_fs}
\forall \theta_C \in H^{\frac{1}{2}}(C), \quad
\left(
\partial_t (\left(c_{fc} + \gamma_c)  p_c^{k+1}
\right), \theta_c
\right)_C
+ \left(
\frac{1}{12} \overline{\nabla} \cdot  \vec{\zeta}^{k+1}, \theta_c
\right)_C \\
- \left(
\jump{ \vec{z}^{k+1}}_C \cdot \vec{n}^+, \theta_c
\right)_C
= \left(
\partial_t \sigma^k_{fv}, \theta_c
\right)_C
+ \left(
\text{rhs}, \theta_c
\right)_C ,
\end{multline}
\begin{equation}
\label{eq:flux_frac_fs}
\forall \vec{q_c} \in \mathbf{Z}_C, \quad
\left(
\visc \mathbf{K}^{-1}_c  \vec{\zeta}^{k+1}, \vec{q}_c
\right)_C
= \left(
 p_c^{k+1}, \overline{\nabla}\cdot \vec{q}_c
\right)_C
+ \left(
\text{rhs}, \vec{q}_c
\right)_C,
\end{equation}
\end{subequations}
with $\mathbf{K}_c$ denoting permeability of the fracture. Once the flow is computed, we update the displacements and contact mechanics.

\noindent
\textbf{Step B}: Given $ p^{k+1},  \vec{z}^{k+1},  p_c^{k+1},  \vec{\zeta}^{k+1}$, compute 
$ \disp^{k+1}$ and $ \lambdafull^{k+1}$:
\begin{subequations}
\begin{multline}
\label{eq:elasticity_fs}
\forall \vec{v} \in V, \quad 
2 G \left(  \varepsilon(\disp^{k+1}), \varepsilon(\vec{v}) \right) 
+ \Lame ( \nabla \cdot  \disp^{k+1}, \nabla \cdot \vec{v} ) \\
- \alpha (  p^{k+1}, \nabla \cdot \vec{v} )
+ (  p_c^{k+1}, \jump{\vec{v}}_\Cfrac \cdot \vec{n}^+ )_\Cfrac 
- (  \lambdafull^{k+1}, \vec{v} )_\Cfrac
= ( \text{rhs}, \vec{v} ),
\end{multline}
\begin{equation}
\label{eq:contact_normal_fs}
    \forall \theta_c \in L^2(C) \quad
    ( C_\text{n}^{k+1}, \theta_c)_\Cfrac  = (\text{rhs}, \theta_c)_\Cfrac;
    \quad
    ( C_\tau^{k+1}, \theta_c)_\Cfrac  = (\text{rhs}, \theta_c)_\Cfrac.
\end{equation}
\end{subequations}
where $\varepsilon(\disp)$ denotes the small deformation tensor.

We denote the fixed quantities from the previous Newton iterations with the subscript 0, e.g. $\ujumpt_0$. $C_\text{n}^{k+1}$ and $C_\tau^{k+1}$ are the finite increments of the complementarity functions \cref{eq:contact_equalities_for_fs}. To write them down, we first define these auxiliary coefficients:
\begin{equation}
\label{eq:auxiliary_coefficients}
    \vec{y} \coloneq {\lambdat}_0 + c \ujumpt_0;
    \quad
    \epsilon \coloneq \modulus{{\lambdat}_0} - F \modulus{{\lambdan}_0};
    \quad
    \beta_B \coloneq \dfrac{\Delta u_\text{max}^2 K_{n}}{(\Delta u_\text{max} K_{n} - {\lambdan}_0)^2},
\end{equation}
where $\epsilon$ is interpreted as Coulomb's friction law mismatch which may appear during Newton iterations, and $\beta_B$ corresponds to the linearized Barton-Bandis fracture elasticity constitutive law, see \cref{eq:gap_function}. 

The linearized complementarity functions have the following form:
\begin{subequations}
\label{eq:contact_linearized}
\begin{align}
    C_\text{n}^{k+1} = \begin{cases}
    - c (\ujumpn^{k+1} - \beta_{B} \lambdan^{k+1}) \quad &\text{for } \lambdan_0 + c (\ujumpn_0 - g_0) < 0 \\
    \lambdan^{k+1} \quad &\text{otherwise}
    \end{cases},
\end{align}
\begin{align}
    C_\tau^{k+1} = \begin{cases}
        c (-F \ujumpt_0 \lambdan^{k+1} + b_0 \ujumpt^{k+1}) \quad &\text{for } b_0 > \modulus{\ujumpt_0 + c \lambdat_0} \\
        c \epsilon \ujumpt^{k+1} 
        + (\modulus{\vec{y}} + \epsilon) \lambdat^{k+1}
        - F \vec{y} \lambdan^{k+1} 
        \quad 
        &\text{for } 0 < b_0 \leq \modulus{\ujumpt_0 + c \lambdat_0}
        \\
        \lambdat^{k+1} \quad &\text{for } b_0 \leq 0 
    \end{cases},
\end{align}
\end{subequations}
where $b_0 \coloneq -F \lambdan_0$ is a fixed friction bound. Note that the state of the complementarity functions is fixed on the previous linearization step according to Newton's method.
\subsection{Contraction proof}
We list all the assumptions we make to prove that the scheme is a contraction.
\begin{assumption}
\label{assumption:1}
For the mechanical modeling, the reservoir matrix is a homogeneous, isotropic, and saturated poroelastic medium. The \LameName coefficients $\Lame$ and $G$, the dimensionless Biot coefficient $\alpha$, the reference density for the fluid $\rhoRef$, and the viscosity $\visc$ are positive. 
\end{assumption}
\textit{Remark:}
The assumption of homogeneity is a simplification; extension to a heterogeneous medium is straightforward, as is the extension to a tensor $\alpha$.
\begin{assumption}
The fluids are assumed slightly compressible and for simplicity, the densities of the fluids are assumed to be linear functions of pressure.
\end{assumption}
\textit{Remark:}
 The linearized density terms are not crucial for the proof, but just a matter of convenience to match the notation of \cite{girault_convergence_2016}.
\begin{assumption}
The absolute permeability tensors, $\mathbf{K}$ and $\mathbf{K}_c$ are assumed to be symmetric, bounded, uniformly positive definite in space and constant in time.
\end{assumption}
\begin{assumption}
\label{assumption:sticking_ut_zero}
For the linearized sticking case $b_0 > \modulus{\ujumpt_0 + c \lambdat_0}$, the following holds:
\begin{equation}
    \ujumpt_0 = 0
\end{equation}
\end{assumption}
\textit{Remark:}
Due to the regularization inherent in the Augmented Lagrangian formulation of the contact conditions, the tangential jump across the fracture can be non-zero even if the fracture is in a sticking case. Such a nonphysical configuration is disallowed by this assumption.
\begin{assumption}
\label{assumption:coercivity}
The \LameName coefficients: bulk modulus $\Lame$, shear modulus $G$, the compressibilities of the fluid, $c_f$ and $c_{fc}$, the Biot constants $\alpha$ and $M$, and the initial porosity $\varphi_0$ satisfy:
\begin{equation}
\label{eq:convergence_condition}
    4 G \mathcal{D} \geq \dfrac{\Lame}{c_{fc} \alpha^2} 
    \left( 
    \dfrac{1}{M} + c_{f} \varphi_0
    \right)
    + 2 \Lame \left(
    \dfrac{1}{\beta^\text{\textnormal{min}}_B}
    + \dfrac{c \epsilon^\textnormal{max}}{(\modulus{\vec{y}} + \epsilon)^\textnormal{min}}
    + \dfrac{F \modulus{\vec{y}}^\textnormal{max}}{(\modulus{\vec{y}} + \epsilon)^\textnormal{min} \beta_B^\textnormal{min}}
    \right)
    ,
\end{equation}
where $\mathcal{D}$ is a product of optimal constants in Korn's and trace inequalities.
\end{assumption}
\textit{Remark:}
Such requirement arises in the convergence study of the scheme. The factors in \cref{eq:convergence_condition} represent physical parameters, including $\mathcal{D}$ that takes into account the geometrical setting of the problem. Its presence suggests that the shape and location of the fracture plays a role in the convergence of the iterative scheme; see \cite{girault_convergence_2016} for details. The second term in the right-hand side reflects the contact mechanics influence, where the ``min'' and ``max'' superscripts refer to the extrema values of the contact state-related coefficients on the known Newton iteration.
We define the difference between two iterates: $\delta_2 \xi = \xi^{k+1} - \xi^{k}; \delta_1 \xi = \xi^{k} - \xi^{k-1}$, where $\xi$ may stand for $p, p_c, \disp, \vec{z}, \vec{\zeta}$, $\lambdafull$, etc. 
We start the contraction proof by first addressing the mass balance and fluid flux equation.
Our first result is quite close to the result from \cite{girault_convergence_2016}, which accounts for all parts of the problem except for the contact mechanics term. We spare the details and mention only the key steps in the proof.

\begin{lemma}
    Under the assumptions \ref{assumption:1} - \ref{assumption:coercivity}, the following holds:
    \begin{multline}
    \label{eq:girault_contraction}
        \norm{\partial_t \delta_2 \sigma_v}^2_{\Omega_t} 
        + \Lame^2 \norm{\nabla \cdot \partial_t \delta_2 \disp}^2_{\Omega_t}
        + \dfrac{1}{\beta} \norm{\visc^\frac{1}{2} \mathbf{K}^{-\frac{1}{2}} \delta_2 \vec{z}}^2_{L^2(\Omega \backslash \Cfrac)} \\
        + \dfrac{1}{12 \beta} \norm{\visc^\frac{1}{2} \mathbf{K}_c^{-\frac{1}{2}} \delta_2 \vec{\zeta}}^2_{L^2(\Cfrac)}
        + \norm{\partial_t \delta_2 \sigma_f}^2_{\Cfrac^t}
        + \left(4 G \Lame \mathcal{D} - \dfrac{1}{\chi^2} \right) \norm{\partial_t \delta_2 \ujumpn}^2_{\Cfrac^t} \\
        + 4 G \Lame \mathcal{D} \norm{\partial_t \delta_2 \ujumpt}^2_{\Cfrac^t}
        \leq \dfrac{1}{\beta^2 \Lame^2} \norm{\partial_t \delta_1 \sigma_v}^2_{\Omega^t} \\
        + \dfrac{\chi^2}{\beta_c \beta} \norm{\partial_t \delta_1 \sigma_f}^2_{\Omega^t}
        + 2 \Lame \int^t_0 \left(
            \partial_t \delta_2 \lambdafull, \partial_t \delta_2 \ujump
        \right)_C d \tau.
    \end{multline}
    
\end{lemma}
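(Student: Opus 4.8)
The plan is to follow the energy-estimate strategy of \cite{girault_convergence_2016}, keeping track of the extra bookkeeping introduced by the linearized contact traction. First I would form the \emph{difference system}: subtract the Step~A equations \eqref{eq:mass_balance_fs}--\eqref{eq:flux_frac_fs} at iterations $k+1$ and $k$, and likewise the Step~B equations \eqref{eq:elasticity_fs}--\eqref{eq:contact_normal_fs}, so that all right-hand side data cancel. The resulting system has the increments $\delta_2 p$, $\delta_2\vec z$, $\delta_2 p_c$, $\delta_2\vec\zeta$, $\delta_2\disp$, $\delta_2\lambdafull$ as unknowns; because Step~A at level $k+1$ freezes the displacement at $\disp^k$ and the traction at $\lambdafull^k$, the only lagged data entering the flow difference equations are $\alpha\nabla\cdot\delta_1\disp$ in the matrix mass balance and, through $\sigma^k_{fv}$, $\delta_1\ujumpn$ in the fracture mass balance, i.e.\ quantities behind by one sweep. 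This lag is the mechanism that turns the estimate into a contraction.

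Next I would assemble the flow energy estimate. Testing the matrix and fracture mass-balance differences with $\partial_t\delta_2 p$ and $\partial_t\delta_2 p_c$, the matrix and fracture Darcy-flux differences with $\partial_t\delta_2\vec z$ and $\partial_t\delta_2\vec\zeta$, adding the four identities and integrating the flux terms by parts cancels the pressure--divergence couplings and the inter-dimensional matrix--fracture coupling against its mirror. Using the identities relating $\partial_t\sigma_v$ to $\partial_t p$ and $\nabla\cdot\partial_t\disp$, and $\chi\,\partial_t\sigma_f$ to $\partial_t p_c$ and $\partial_t\ujumpn$, together with the definitions of $\beta$, $\beta_c$, $\gamma_c$, $\chi$ in \eqref{eq:aux_coefs_gamma_c}, the stabilization coefficient $\tfrac{\alpha^2}{\Lame}$ (resp.\ $\gamma_c$) recombines the pressure and divergence contributions into $\norm{\partial_t\delta_2\sigma_v}^2$ (resp.\ $\norm{\partial_t\delta_2\sigma_f}^2$). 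Young's inequality applied to the lagged data produces the $\delta_1$ norms with coefficients $\tfrac{1}{\beta^2\Lame^2}$ and $\tfrac{\chi^2}{\beta_c\beta}$ on the right, and leaves a residual coupling to mechanics of the form $-(\partial_t\delta_2\lambdafull,\partial_t\delta_2\disp)_C$ coming from the contact traction in the elasticity equation, which at this stage is kept unexpanded.

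Then I would add the mechanical estimate: testing the elasticity difference \eqref{eq:elasticity_fs} with $\partial_t\delta_2\disp$ yields the coercive terms $2G\norm{\varepsilon(\partial_t\delta_2\disp)}^2$ and $\Lame\norm{\nabla\cdot\partial_t\delta_2\disp}^2$, balanced against the pressure couplings already absorbed in the flow step and the contact coupling $(\partial_t\delta_2\lambdafull,\partial_t\delta_2\disp)_C$. By Korn's inequality and the trace inequality, with $\mathcal D$ the product of the two optimal constants, the coercive part dominates $4G\Lame\mathcal D\big(\norm{\partial_t\delta_2\ujumpn}^2_C+\norm{\partial_t\delta_2\ujumpt}^2_C\big)$ on the fracture; the normal part must additionally absorb the deficit $-\tfrac{1}{\chi^2}\norm{\partial_t\delta_2\ujumpn}^2_C$ left over from the fracture-flow manipulation (here \cref{assumption:sticking_ut_zero} and the sign of $\Lame\beta-1$ are used), which is why the coefficient on the left is $\big(4G\Lame\mathcal D-\tfrac{1}{\chi^2}\big)$. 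Expressing $\delta_2\disp$ restricted to the fracture through the interface jump $\ujump$ and carrying the weight $2\Lame$ picked up when the flow and mechanics energies are combined, the contact contribution remains as the explicit term $2\Lame\int_0^t(\partial_t\delta_2\lambdafull,\partial_t\delta_2\ujump)_C\,d\tau$. Integrating in time over $[0,t]$ and using the zero initial and reference values collects all terms into \eqref{eq:girault_contraction}.

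I expect the main obstacle to be the constant bookkeeping: one must scale the mechanical estimate by the correct powers of $\Lame$ and $\beta$, tune the Young's-inequality parameters so that the $\partial_t\delta_2\sigma_v$ and $\partial_t\delta_2\sigma_f$ norms appear with coefficient exactly $1$ on the left and exactly $\tfrac{1}{\beta^2\Lame^2}$ and $\tfrac{\chi^2}{\beta_c\beta}$ on the right, and verify that $\nabla\cdot\partial_t\delta_2\disp$ is not double-counted between the two sides. Extracting precisely the $-\tfrac{1}{\chi^2}$ deficit from the fracture mass balance, rather than some coarser constant, is the delicate point, and it is exactly where the auxiliary coefficients of \eqref{eq:aux_coefs_gamma_c} are tailored. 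The contact coupling itself is deliberately left unestimated in this lemma; bounding $(\partial_t\delta_2\lambdafull,\partial_t\delta_2\ujump)_C$ in terms of $\norm{\partial_t\delta_2\disp}$ via the linearized complementarity relations \eqref{eq:contact_linearized} --- which brings in $\beta_B$, $\epsilon$, $\vec y$ from \eqref{eq:auxiliary_coefficients} and ultimately the coercivity condition \eqref{eq:convergence_condition} --- is the content of the next step of the argument.
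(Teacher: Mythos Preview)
Your overall strategy matches the paper's: difference the Step~A and Step~B equations, test with the natural increments, combine the flow and elasticity energy identities following \cite{girault_convergence_2016}, and leave the contact coupling $(\partial_t\delta_2\lambdafull,\partial_t\delta_2\ujump)_C$ as an unexpanded term on the right.

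There is one technical slip in the test functions. For the Darcy-flux differences you propose testing with $\partial_t\delta_2\vec z$ and $\partial_t\delta_2\vec\zeta$, but then the pressure--divergence term produced by the flux equation is $(\delta_2 p,\nabla\cdot\partial_t\delta_2\vec z)$, which does \emph{not} cancel the term $(\nabla\cdot\delta_2\vec z,\partial_t\delta_2 p)$ coming from the mass balance tested with $\partial_t\delta_2 p$. The paper instead differentiates the flux difference in time and tests with $\delta_2\vec z$ (and $\delta_2\vec\zeta$), which yields $(\partial_t\delta_2 p,\nabla\cdot\delta_2\vec z)$ on the right and $\tfrac12\tfrac{d}{dt}\norm{\visc^{1/2}\mathbf K^{-1/2}\delta_2\vec z}^2$ on the left --- exactly what is needed for both the cancellation and the flux norm in \eqref{eq:girault_contraction}. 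Two smaller points: the contact term enters through the elasticity estimate, not the flow estimate (in the paper it first appears on the right of \eqref{eq:lemma_elasticity}); and \cref{assumption:sticking_ut_zero} is not used in this lemma at all --- it is invoked only when bounding the contact term in the subsequent lemma.
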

\begin{proof}
The proof consists of two steps. We first prove the following estimate:
    \begin{equation}
    \label{eq:lemma_flow}
    \begin{aligned}
        \norm{\alpha \partial_t \delta_2 p}^2_{L^2(\Omega \backslash \Cfrac)}
        + \dfrac{1}{\beta} \dfrac{d}{dt} \norm{\mu^\frac{1}{2}_f \mathbf{K}^{-\frac{1}{2}} \delta_2 \vec{z}}^2_{L^2(\Omega \backslash \Cfrac)}
        + 2 \dfrac{\beta_c}{\beta} \norm{\partial_t \delta_2 p_c}^2_{L^2(\Cfrac)} \\
        + \dfrac{1}{12 \beta} \dfrac{d}{dt}
        \norm{\visc^\frac{1}{2} \mathbf{K}_c^{-\frac{1}{2}} \delta_2 \vec{\zeta}}
        \leq \dfrac{1}{\beta^2 \Lame^2}
        \norm{\partial_t \delta_1 \sigma_v}^2_{L^2(\Omega \backslash \Cfrac)} \\
        + \dfrac{2}{\beta} (\gamma_c \partial_t \delta_1 p_c - \partial_t \delta_1 \ujumpn, \partial_t \delta_2 p_c)_C.
    \end{aligned}
    \end{equation}

    We take the flow model \cref{eq:mass_balance_fs,eq:flux_fs,eq:mass_balance_frac_fs,eq:flux_frac_fs} in the matrix and fractures and exploit the linearity to get the difference of two successive iterates. We choose for the test functions $\partial_t \delta_2 p, \delta_2 \vec{z}, \partial_t \delta_2 p_c$ and $\delta_2 \vec{\zeta}$, respectively.
    An application of  Young's inequality and differentiating the matrix flux term in the matrix \eqref{eq:flux_fs} and in fractures \eqref{eq:flux_frac_fs} with respect to time give the desired result. The more detailed derivation is given in Section 4, Step 1 of \cite{girault_convergence_2016}.

Next, we address the elasticity equation. The procedure is the same as in Section 4, Step 2 of \cite{girault_convergence_2016}, but we add the contact mechanics term. The following estimate holds:
\begin{equation}
\label{eq:lemma_elasticity}
\begin{aligned}
    4 G \Lame \mathcal{D} \norm{\partial_t \delta_2 \ujump}^2_{\Cfrac^t} 
    + 2 \Lame^2 \norm{\nabla \cdot \partial_t \delta_2 \disp}^2_{\Omega_t} \\
    - 2 \Lame \int^t_0 \alpha \left(
    \partial_t \delta_2 p, \nabla \cdot \partial_t \delta_2 \disp
    \right) d \tau
    - 2 \Lame \int^t_0 \left(
        \partial_t \delta_2 p_c, \partial_t \delta_2 \ujumpn
    \right)_C d \tau \\
    \leq 2 \Lame \int^t_0 \left(
        \partial_t \delta_2 \lambdafull, \partial_t \delta_2 \ujump
    \right)_C d \tau,
\end{aligned}
\end{equation}
where the abbreviation $\Omega_t$ stands for $L^2((\Omega \backslash \Cfrac) \times (0, t)).$
Again, we take the difference of successive iterates of the elasticity Eq. \eqref{eq:elasticity_fs}, differentiate it with respect to time and choose for the test function $\partial_t \delta_2 \disp$. We combine Eqs. \eqref{eq:lemma_flow} and \eqref{eq:lemma_elasticity} integrated over $t$, rearrange the terms and infer:
    \begin{multline}
        \norm{\partial_t \delta_2 \sigma_v}^2_{\Omega_t} 
        + \Lame^2 \norm{\nabla \cdot \partial_t \delta_2 \disp}^2_{\Omega_t}
        + \dfrac{1}{\beta} \norm{\visc^\frac{1}{2} \mathbf{K}^{-\frac{1}{2}} \delta_2 \vec{z}}^2_{L^2(\Omega \backslash \Cfrac)} \\
        + \dfrac{1}{12 \beta} \norm{\visc^\frac{1}{2} \mathbf{K}_c^{-\frac{1}{2}} \delta_2 \vec{\zeta}}^2_{L^2(\Cfrac)}
        + \left\{
        \dfrac{2 \beta_c}{\beta} \norm{\partial_t \delta_2 p_c}^2_{\Cfrac^t} 
        \right. \\ \left.
        - 2 \Lame \int_0^t \left( \partial_t \delta_2 p_c, \partial_t \delta_2 \ujumpn \right)_\Cfrac d \tau
        + 4 G \Lame \mathcal{D} \norm{\partial_t \delta_2 \ujumpn}^2_{\Cfrac^t}
        \right\} \\
        + 4 G \Lame \mathcal{D} \norm{\partial_t \delta_2 \ujumpt}^2_{\Cfrac^t}
        \leq \dfrac{1}{\beta^2 \Lame^2} \norm{\partial_t \delta_1 \sigma_v}^2_{\Omega^t} 
        \\
        + \int_0^t \dfrac{2}{\beta} \left( \gamma_c \partial_t \delta_1 p_c - \partial_t \delta_1 \ujumpn, 
        \partial_t \delta_2 p_c \right)
        d \tau 
        + 2 \Lame \int^t_0 \left(
            \partial_t \delta_2 \lambdafull, \partial_t \delta_2 \ujump
        \right)_C d \tau.
    \end{multline}
    Let us define the term in the fractures, analogous to the volumetric stress in the matrix: $\chi \partial_t \delta_1 \sigma_f = \gamma_c \partial_t \delta_1 p_c - \partial_t \delta_1 \ujumpn$. After rearranging the terms and completing the square with the new term, we complete the proof. For a more detailed procedure, the reader is referred to Lemma 4.1 of \cite{girault_convergence_2016}.
\end{proof}
Next, we address the contact mechanics term, which is located on the right-hand side of Eq. \eqref{eq:lemma_elasticity}.
\begin{lemma}
    Given the  assumptions \ref{assumption:1} - \ref{assumption:coercivity}, the following holds:
    \begin{multline}
        \label{eq:contact_mechanics_bound}
        \left( \partial_t \delta_2 \lambdafull, \partial_t \delta_2 \ujump \right)_\Cfrac 
        \leq \\
        \left( 
            \dfrac{1}{\beta^\text{\textnormal{min}}_B} + \dfrac{c \epsilon^\text{\textnormal{max}}}{(\modulus{\vec{y}} + \epsilon)^\text{\textnormal{min}}}
            + \dfrac{F \modulus{\vec{y}}^\text{\textnormal{max}}}{(\modulus{\vec{y}} + \epsilon)^\text{\textnormal{min}} \beta_B^\text{\textnormal{min}} }
        \right)
        \norm{\partial_t \delta_2 \ujump}^2_{L^2(\Cfrac)},
    \end{multline}
    where the coefficients $\vec{y}, \epsilon$ and $\beta_B$ are given by \cref{eq:auxiliary_coefficients}.
\end{lemma}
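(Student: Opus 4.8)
The plan is to exploit that, in Step B, the weak contact equations \eqref{eq:contact_normal_fs} carry a right-hand side equal to the contact residual at the current Newton point, hence frozen across the fixed stress iterations. Subtracting the $k$-th equation from the $(k{+}1)$-st and letting $\theta_c$ range over all of $L^2(\Cfrac)$, I would first deduce $\delta_2 C_\text{n}^{k+1}=0$ and $\delta_2 C_\tau^{k+1}=0$ almost everywhere on $\Cfrac$. Feeding these into the region-wise formulas \eqref{eq:contact_linearized} then yields, on each of the four fracture states, an explicit linear relation between the traction and jump increments: on the open set $\delta_2\lambdan=0$, and where $b_0\leq 0$ also $\delta_2\lambdat=0$; on the contact set $\delta_2\lambdan=\beta_B^{-1}\delta_2\ujumpn$; on the stick set $\delta_2\ujumpt=0$, since Assumption \ref{assumption:sticking_ut_zero} eliminates the term carrying ${\ujumpt}_0$ and $b_0>0$ then forces $\delta_2\ujumpt=0$; and on the slide set $\delta_2\lambdat=(\modulus{\vec{y}}+\epsilon)^{-1}\big(F\vec{y}\,\delta_2\lambdan-c\epsilon\,\delta_2\ujumpt\big)$, into which the contact-set relation for $\delta_2\lambdan$ may be substituted, as sliding requires active normal contact.

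Next I would differentiate these identities in time --- legitimate because $\beta_B$, $\vec{y}$, $\epsilon$ and $F$ are built from frozen previous-Newton-step quantities and are therefore time independent --- and split the bilinear form orthogonally as $\big(\partial_t\delta_2\lambdafull,\partial_t\delta_2\ujump\big)_\Cfrac=\big(\partial_t\delta_2\lambdan,\partial_t\delta_2\ujumpn\big)_\Cfrac+\big(\partial_t\delta_2\lambdat,\partial_t\delta_2\ujumpt\big)_\Cfrac$. The open and stick regions contribute nothing. On the contact set the normal term equals $\beta_B^{-1}\modulus{\partial_t\delta_2\ujumpn}^2\leq(\beta_B^{\textnormal{min}})^{-1}\modulus{\partial_t\delta_2\ujump}^2$, where $\beta_B\geq\beta_B^{\textnormal{min}}>0$ follows from Assumption \ref{assumption:1} and $\Delta u_\text{max}K_n>{\lambdan}_0$. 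On the slide set the tangential term is $(\modulus{\vec{y}}+\epsilon)^{-1}\big(F\beta_B^{-1}(\partial_t\delta_2\ujumpn)(\vec{y}\cdot\partial_t\delta_2\ujumpt)-c\epsilon\,\modulus{\partial_t\delta_2\ujumpt}^2\big)$; I would bound the dot product by Cauchy--Schwarz, use $\modulus{\partial_t\delta_2\ujumpn},\modulus{\partial_t\delta_2\ujumpt}\leq\modulus{\partial_t\delta_2\ujump}$, and replace the coefficients by their extrema $\modulus{\vec{y}}^{\textnormal{max}}$, $(\modulus{\vec{y}}+\epsilon)^{\textnormal{min}}$, $\beta_B^{\textnormal{min}}$, $\epsilon^{\textnormal{max}}$ over $\Cfrac$. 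Since the three resulting coefficients are all nonnegative, their sum dominates the integrand on every fracture state, so the pointwise estimate holds almost everywhere on $\Cfrac$; integrating over the fracture then gives \eqref{eq:contact_mechanics_bound}.

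The routine parts are the Cauchy--Schwarz and triangle estimates and the reassembly of the disjoint state regions into a single $L^2(\Cfrac)$ bound. The hard part --- the step I expect to demand the most care --- is the sign bookkeeping around $\epsilon=\modulus{{\lambdat}_0}-F\modulus{{\lambdan}_0}$, which, being a friction-law \emph{mismatch}, may have either sign during Newton's method: one must ensure that $\modulus{\vec{y}}+\epsilon$ stays bounded away from zero on the slide set so that $(\modulus{\vec{y}}+\epsilon)^{\textnormal{min}}>0$ is meaningful, and that $-c\epsilon\,\modulus{\partial_t\delta_2\ujumpt}^2$ is correctly absorbed into the nonnegative coefficient $c\epsilon^{\textnormal{max}}$ (so $\epsilon^{\textnormal{max}}$ must be understood as the supremum of $\modulus{\epsilon}$ over $\Cfrac$, or the set $\{\epsilon<0\}$ is treated separately); the well-posedness hypothesis on the discrete contact problem is what excludes this degeneracy. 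A secondary but essential point is that the passage from the tested identities $\delta_2 C=0$ to the pointwise relations used throughout relies on $\theta_c$ being arbitrary in $L^2(\Cfrac)$.
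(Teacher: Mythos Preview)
Your proposal is correct and follows essentially the same route as the paper: split into normal and tangential parts, use the differenced linearized contact relations \eqref{eq:contact_linearized} on each state region to express $\partial_t\delta_2\lambdafull$ in terms of $\partial_t\delta_2\ujump$, invoke Assumption~\ref{assumption:sticking_ut_zero} for the stick set, substitute the normal-contact relation into the slide-set tangential term, and bound the coefficients by their extrema. Your presentation is slightly more streamlined---you pass to pointwise identities from the arbitrariness of $\theta_c\in L^2(\Cfrac)$ and then compute directly, whereas the paper stays in weak form and chooses specific test functions---and you explicitly flag the sign bookkeeping around $\epsilon$ that the paper handles implicitly; these are cosmetic differences, not substantive ones.
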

\begin{proof}
    We start with considering the scalar product:
    \begin{equation}
    \label{eq:contact_normal_tangential}
    \left( \partial_t \delta_2 \lambdafull, \partial_t \delta_2 \ujump \right)_\Cfrac = \left( \partial_t \delta_2 \lambdan, \partial_t \delta_2 \ujumpn \right)_\Cfrac + \left( \partial_t \delta_2 \lambdat, \partial_t \delta_2 \ujumpt \right)_\Cfrac.
    \end{equation}
    We take the difference of two successive iterates of Eq. \eqref{eq:contact_linearized}, differentiate w.r.t time and substitute the test function $\partial_t \delta_2 \ujump$.
    
    For the normal component, we define the closed and open subsets of the contact boundary: $\Cfrac_\text{n,cl} \subset \Cfrac: \{ \lambdan + c (\ujumpn - g) < 0 \}$ and $\Cfrac_\text{n,op} = \Cfrac \backslash \Cfrac_\text{n,cl}$, and obtain:
    \begin{equation}
    \begin{aligned}
        \begin{cases} 
        \left(        
        \beta_B \partial_t \delta_2 \lambda_n,
        \partial_t \delta_2 \ujumpn
        \right)_\mathcal{C_\text{n,cl}}
        &= \norm{
        \partial_t \delta_2 \ujumpn
        }^2_{L^2(\Cfrac_\text{n,cl})},
         \\
         \left(
         \partial_t \delta_2 \lambdan,
         \partial_t \delta_2 \ujumpn
         \right)_\mathcal{C_\text{n,op}} 
         &= 0.
        \end{cases}\\
    \end{aligned}
    \end{equation}
    Since the test function is in $L^2$, the appropriate restrictions can be chosen to make each of the equalities true, not only their sum. The strictly positive coefficient $\beta_B$ is bound by its minimum:
    \begin{equation}
        \left(        
        \partial_t \delta_2 \lambda_n,
        \partial_t \delta_2 \ujumpn
        \right)_\mathcal{C_\text{n,cl}}
        \leq \dfrac{1}{\beta_B^\text{min}}\norm{
        \partial_t \delta_2 \ujumpn
        }^2_{L^2(\Cfrac_\text{n,cl})}.
    \end{equation}
    We take the sum of the components and obtain the following for the normal component:
    \begin{equation}
        \label{eq:sticking_normal_bound}
        \left(        
        \partial_t \delta_2 \lambda_n,
        \partial_t \delta_2 \ujumpn
        \right)_\Cfrac
        \leq \dfrac{1}{\beta_B^\text{min}}\norm{
        \partial_t \delta_2 \ujumpn
        }^2_{L^2(\Cfrac_\text{n,cl})}
        \leq \dfrac{1}{\beta_B^\text{min}}\norm{
        \partial_t \delta_2 \ujumpn
        }^2_{L^2(\Cfrac)}.
    \end{equation}
    Next, we address the tangential component of Eq. \eqref{eq:contact_linearized}. Again, we take the difference of two successive iterations, differentiate w.r.t. time, and split the contact boundary into three subsets corresponding to the sticking, sliding and open states: $\Cfrac_\text{st} \subset \Cfrac : \{ b > \modulus{\vec{y}}\}$, $\Cfrac_\text{sl} \subset \Cfrac : \{ 0 < b \leq \modulus{\vec{y}}\}$ and $\Cfrac_{\tau,\text{op}} = \Cfrac \backslash (\Cfrac_\text{st} \cup \Cfrac_\text{sl})$. For the open state:
    \begin{equation}
        \left( \partial_t \delta_2 \lambdat, \partial_t \delta_2 \ujumpt \right)_{\Cfrac_{\tau,\text{op}}} = 0
    \end{equation}
    For the sliding case, we bound the expression:
    \begin{equation}
        \label{eq:contact_bound_sliding}
        (\vec{\modulus{y} + \epsilon})^\text{min}
        \left(
        \partial_t \delta_2 \lambdat, \partial_t \delta_2 \ujumpt
        \right)_{\Cfrac_\text{sl}}
        \leq c \epsilon^\text{max}  \norm{\partial_t \delta_2 \ujumpt}^2_{L^2(\Cfrac_\text{sl})}
        + F \modulus{\vec{y}} \left(
        \modulus{\partial_t \delta_2 \lambdan}, \modulus{\partial_t \delta_2 \ujumpt}
        \right)_{\Cfrac_\text{sl}},
    \end{equation}
    where we used the following relation to bound the last term: $\vec{y} \cdot \delta_2 \ujumpt \leq \modulus{\vec{y}} \modulus{\delta_2 \ujumpt}$. Note that for the sliding case, $\modulus{\vec{y}} > 0$ according to $\Cfrac_\text{sl}$. 
    Next, we consider the term $\left(\partial_t \delta_2 \ujumpn, \modulus{\partial_t \delta_2 \lambdat}\right)_{\Cfrac_\text{sl}}$, which can be bounded by the same procedure as in \cref{eq:sticking_normal_bound} by taking the test function $\partial_t \delta_2 \ujump$ on $\Cfrac_\text{sl}$ and 0 on $\Cfrac \backslash \Cfrac_\text{sl}$:
    \begin{equation}
        \left(        
        \modulus{\partial_t \delta_2 \lambda_n},
        \modulus{\partial_t \delta_2 \ujump}
        \right)_{\Cfrac_\text{sl}}
        \leq \dfrac{1}{\beta_B^\text{min}}\norm{
        \partial_t \delta_2 \ujump
        }^2_{L^2(\Cfrac_\text{sl})}.
    \end{equation}
    Substitution into \cref{eq:contact_bound_sliding} gives the following:
    \begin{equation}
       \left(
        \partial_t \delta_2 \lambdat, \partial_t \delta_w \ujumpt
        \right)_{\Cfrac_\text{sl}}
        \leq \frac{c \epsilon^\text{max} }{\beta_\text{min}} \norm{\partial_t \delta_2 \ujumpt}^2_{L^2(\Cfrac_\text{sl})}
        + \dfrac{F \modulus{\vec{y}}}{(\vec{\modulus{y} + \epsilon})^\text{min}\beta_B^\text{min}} \norm{
        \partial_t \delta_2 \ujump
        }^2_{L^2(\Cfrac_\text{sl})}.
    \end{equation}
    The next step is to consider the sticking case and substitute the test function $\partial_t \delta_2 \lambdat \in L^2(\Cfrac^D)$. We bound the expression with $\modulus{\lambdan}^\text{min}$ and $\modulus{\ujumpt}^\text{max}$:
    \begin{equation}
        \modulus{\lambdan}^\text{min} \left(
         \partial_t \delta_2 \ujumpt, \partial_t \delta_2 \lambdat
        \right)_{\Cfrac_\text{st}}
        \leq \modulus{\ujumpt}^\text{max} \left(
        \partial_t \delta_2 \lambdan, \partial_t \delta_2 \lambdat
        \right)_{\Cfrac_\text{st}}
        \leq \modulus{\ujumpt}^\text{max} \norm{\partial_t \delta_2 \lambdafull}_{L^2(\Cfrac_\text{st})}.
    \end{equation}
    We recall \cref{assumption:sticking_ut_zero} which dictates that $\ujumpt = 0$ for the sticking case, thus
    \begin{equation}
        \left(
         \partial_t \delta_2 \ujumpt, \partial_t \delta_2 \lambdat
        \right)_{\Cfrac_\text{st}}
        \leq 0.
    \end{equation}
    Finally, we substitute the normal and the tangential inequalities into Eq. \eqref{eq:contact_normal_tangential}:
    \begin{multline}
        \left( \partial_t \delta_2 \lambdafull, \partial_t \delta_2 \ujump \right)_\Cfrac 
        \leq \dfrac{1}{\beta_B^\text{\textnormal{min}}} \norm{\partial_t \delta_2 \ujumpn}^2_{L^2(\Cfrac)}
        + \dfrac{c \epsilon^\text{\textnormal{max}}}{(\modulus{\vec{y}} + \epsilon)^\text{\textnormal{min}}}
        \norm{\partial_t \delta_2 \ujumpt}^2_{L^2(\Cfrac)} \\
        + \dfrac{F \modulus{\vec{y}}^\text{\textnormal{max}}}{(\modulus{\vec{y}} + \epsilon)^\text{\textnormal{min}} \beta_B^\text{\textnormal{min}} }
        \norm{\partial_t \delta_2 \ujump}^2_{L^2(\Cfrac)}.
    \end{multline}
    We combine the normal and the tangential terms and complete the proof.
\end{proof}
We now have all the ingredients to demonstrate the contraction of the fixed stress splitting scheme.
\begin{theorem}
    \label{theorem:convergence}
    Under the assumptions \ref{assumption:1} - \ref{assumption:coercivity}, the fixed stress splitting scheme defined by \cref{eq:mass_balance_fs,eq:mass_balance_frac_fs,eq:flux_fs,eq:flux_frac_fs,eq:elasticity_fs,eq:contact_normal_fs} is a contraction.
\end{theorem}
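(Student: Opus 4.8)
The plan is to derive the contraction almost directly from the two preceding lemmas, which already contain all the analytical work: Lemma~1 (estimate \eqref{eq:girault_contraction}) is a contraction-type inequality whose only obstruction to a clean recursion is the boundary term $2\Lame\int_0^t\left(\partial_t\delta_2\lambdafull,\partial_t\delta_2\ujump\right)_\Cfrac d\tau$ on its right-hand side, and Lemma~2 (estimate \eqref{eq:contact_mechanics_bound}) controls precisely this product pointwise in time by $\kappa\norm{\partial_t\delta_2\ujump}^2_{L^2(\Cfrac)}$, where $\kappa$ denotes the constant in parentheses there, assembled from the frozen quantities $\beta_B$, $\vec{y}$, $\epsilon$ and $F$.

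First I would substitute \eqref{eq:contact_mechanics_bound} into the right-hand side of \eqref{eq:girault_contraction}, split $\norm{\partial_t\delta_2\ujump}^2=\norm{\partial_t\delta_2\ujumpn}^2+\norm{\partial_t\delta_2\ujumpt}^2$, integrate over $(0,t)$, and move $2\Lame\kappa\big(\norm{\partial_t\delta_2\ujumpn}^2_{\Cfrac^t}+\norm{\partial_t\delta_2\ujumpt}^2_{\Cfrac^t}\big)$ to the left. This changes the coefficient of $\norm{\partial_t\delta_2\ujumpn}^2_{\Cfrac^t}$ to $4G\Lame\mathcal{D}-\tfrac{1}{\chi^2}-2\Lame\kappa$ and that of $\norm{\partial_t\delta_2\ujumpt}^2_{\Cfrac^t}$ to $4G\Lame\mathcal{D}-2\Lame\kappa$, leaves the coefficients of $\norm{\partial_t\delta_2\sigma_v}^2_{\Omega_t}$ and $\norm{\partial_t\delta_2\sigma_f}^2_{\Cfrac^t}$ equal to one, and keeps the Darcy and divergence terms on the left non-negative. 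Then I would record the two algebraic identities that close the recursion: from the definitions of $\beta$ and $\chi$,
\[
\frac{1}{\chi^2}=\frac{\Lame(\Lame\beta-1)}{c_{fc}}=\frac{\Lame^2}{c_{fc}\alpha^2}\Big(\frac1M+c_f\varphi_0\Big),
\]
and from the definitions of $\gamma_c$ and $\beta_c$, $\tfrac{\chi^2}{\beta_c\beta}=\tfrac{1}{\Lame^2\beta^2}$, which coincides with the factor $\tfrac{1}{\beta^2\Lame^2}$ already multiplying $\norm{\partial_t\delta_1\sigma_v}^2$ on the right.

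Next I would invoke \cref{assumption:coercivity}: dividing the normal-jump coefficient by $\Lame$ and using the identity for $1/\chi^2$ turns the requirement $4G\Lame\mathcal{D}-\tfrac{1}{\chi^2}-2\Lame\kappa\ge 0$ into exactly \eqref{eq:convergence_condition}; the tangential-jump coefficient is then non-negative a fortiori, since it only drops the non-negative first summand of \eqref{eq:convergence_condition}. With all displacement-jump, Darcy and divergence coefficients on the left non-negative, I discard those terms and set $\mathcal{E}^{k}\coloneq\norm{\partial_t\delta_2\sigma_v}^2_{\Omega_t}+\norm{\partial_t\delta_2\sigma_f}^2_{\Cfrac^t}$ with $\delta_2\xi=\xi^{k+1}-\xi^{k}$; the estimate collapses to
\[
\mathcal{E}^{k}\;\le\;\frac{1}{\beta^2\Lame^2}\,\mathcal{E}^{k-1},
\]
and $\tfrac{1}{\beta^2\Lame^2}<1$ because $\Lame\beta-1=\tfrac{\Lame}{\alpha^2}\big(\tfrac1M+c_f\varphi_0\big)>0$ by \cref{assumption:1}. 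This is the asserted contraction property, with the explicit, $k$-independent rate $\tfrac{1}{\beta^2\Lame^2}<1$. Keeping the discarded terms instead of dropping them additionally yields $\partial_t\delta_2\ujumpn$, $\partial_t\delta_2\ujumpt$, $\nabla\cdot\partial_t\delta_2\disp$, $\delta_2\vec{z}$, $\delta_2\vec{\zeta}\to 0$, and inserting this into \cref{eq:mass_balance_fs,eq:flux_fs,eq:mass_balance_frac_fs,eq:flux_frac_fs,eq:elasticity_fs,eq:contact_normal_fs} propagates convergence to $p$, $p_c$, $\disp$ and $\lambdafull$.

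I expect the one genuinely delicate point in this assembly --- the heavy lifting already being in the two lemmas --- to be the absorption balance: the contact coupling donates a strictly positive multiple of $\norm{\partial_t\delta_2\ujump}^2$ to the right-hand side, so $4G\Lame\mathcal{D}$ (the geometric Korn/trace constant) must be large enough, after also paying the fixed stress ``toll'' $1/\chi^2$ for the fracture pressure, to still keep $4G\Lame\mathcal{D}-\tfrac{1}{\chi^2}-2\Lame\kappa\ge 0$; this is precisely what the contact-mechanics bracket of \eqref{eq:convergence_condition} encodes, and it is also where \cref{assumption:sticking_ut_zero} has already been spent inside Lemma~2 to keep the sticking contribution from entering with the wrong sign.
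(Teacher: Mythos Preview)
Your proposal is correct and follows essentially the same approach as the paper's proof: substitute Lemma~2 into Lemma~1, move the contact term to the left, invoke \cref{assumption:coercivity} to keep the jump coefficients non-negative, and read off a geometric recursion for the volumetric stresses. Your version is in fact a bit sharper in two places: you explicitly compute the identity $\tfrac{\chi^2}{\beta_c\beta}=\tfrac{1}{\Lame^2\beta^2}$, which collapses the two right-hand side factors into a single contraction rate (the paper only cites that both are $<1$), and you keep the normal and tangential jump terms separate rather than lumping them under the weaker coefficient, though this makes no difference to the conclusion.
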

\begin{proof}
    We substitute \cref{eq:contact_mechanics_bound} into \cref{eq:girault_contraction} and move the contact mechanics term to the left-hand side:
    \begin{multline}
        \norm{\partial_t \delta_2 \sigma_v}^2_{\Omega_t} 
        + \Lame^2 \norm{\nabla \cdot \partial_t \delta_2 \disp}^2_{\Omega_t}
        + \dfrac{1}{\beta} \norm{\visc^\frac{1}{2} \mathbf{K}^{-\frac{1}{2}} \delta_2 \vec{z}}^2_{L^2(\Omega \backslash \Cfrac)} \\
        + \dfrac{1}{12 \beta} \norm{\visc^\frac{1}{2} \mathbf{K}_c^{-\frac{1}{2}} \delta_2 \vec{\zeta}}^2_{L^2(\Cfrac)}
        + \norm{\partial_t \delta_2 \sigma_f}^2_{\Cfrac^t}
        + \left( 
            4 G \mathcal{D} \Lame - \dfrac{1}{\chi^2} \right. \\
            \left. - 2 \Lame (
                \dfrac{1}{\beta^\text{\textnormal{min}}_B}
                + \dfrac{c \epsilon^\textnormal{max}}{(\modulus{\vec{y}} + \epsilon)^\textnormal{min}}
                + \dfrac{F \modulus{\vec{y}}^\textnormal{max}}{(\modulus{\vec{y}} + \epsilon)^\textnormal{min} \beta_B^\textnormal{min}}
                )
        \right)
        \norm{\partial_t \delta_2 \ujump}^2_{\Cfrac^t} \\
        \leq \dfrac{1}{\beta^2 \Lame^2} \norm{\partial_t \delta_1 \sigma_v}^2_{\Omega^t}
        + \dfrac{\chi^2}{\beta_c \beta} \norm{\partial_t \delta_1 \sigma_f}^2_{\Omega^t}
    \end{multline}
    As shown in \cite{girault_convergence_2016}, $\Lame \beta > 1$ and $\chi^2 / \beta_c \beta < 1$, so \cref{theorem:convergence} implies that for any $t$ in $(0, t)$ $\partial_t \sigma_v$ and $\partial_t \sigma_f$ are Cauchy sequences in $L^2((\Omega \backslash \Cfrac) \times (0, t))$ and $L^2((\Cfrac \times (0, t))$, respectively, with geometric convergence. It also implies that $\partial_t \nabla \cdot \disp$ and $\partial_t \ujump$ converge geometrically in $L^2((\Omega \backslash \Cfrac) \times (0, t))$ and $L^2((\Cfrac \times (0, t))$, respectively. Thus, we can obtain strong convergence of all the considered sequences, see \cite{girault_convergence_2016} for the details. The existence of the limiting functions in the relevant spaces comes from the completeness of these spaces.
\end{proof}
\Cref{theorem:convergence} shows that using the coefficient $\gamma_c$, which is defined in \cref{eq:aux_coefs_gamma_c}, to stabilize the fluid mass balance equation in fractures together with the classical fixed stress stabilization for the fluid mass balance in the ambient dimension provides a conditionally stable iterative scheme.
A tedious calculation shows that $\gamma_c$ is equivalent to $l_\text{frac}$ in \cref{eq:fixed_stress_coefs}.
This indicates that $l_\text{frac}$ is a proper approximation to build the second-level Schur complement and justifies its usage. 
We also experimentally observe that the value of the fixed stress stabilization coefficient in the ambient dimension can be adjusted to provide faster convergence according to \cite{BOTH2017101,mikelic_convergence_2013}, so we use the sharper coefficient of $l_\text{mat}$ in \cref{eq:fixed_stress_coefs}.
\begin{corollary}
\label{corollary:1}
    \Cref{assumption:coercivity} can be simplified if we consider the physically possible states, where $\epsilon = 0$:
    \begin{equation}
    2 G \mathcal{D} \geq
    \dfrac{1}{2 c_{fc} \alpha^2} 
    \left( 
    \dfrac{1}{M} + c_{f} \varphi_0
    \right)
    + \dfrac{1 + F \modulus{\vec{y}}^\textnormal{max}/\modulus{\vec{y}}^\textnormal{min}}{\beta^\text{\textnormal{min}}_B}
    \end{equation}
\end{corollary}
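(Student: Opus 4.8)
The plan is to derive \Cref{corollary:1} as a direct specialization of the coercivity condition \cref{eq:convergence_condition} from \Cref{assumption:coercivity} to the case $\epsilon = 0$. Recall from \cref{eq:auxiliary_coefficients} that $\epsilon = \modulus{{\lambdat}_0} - F\modulus{{\lambdan}_0}$ is the pointwise Coulomb-law mismatch carried over from the previous Newton iterate; the ``physically possible'' states in the statement are precisely those in which the friction law holds exactly, i.e. $\epsilon \equiv 0$ on $\Cfrac$. I would first record the two elementary consequences of $\epsilon \equiv 0$: $\epsilon^\textnormal{max} = 0$, and $(\modulus{\vec{y}} + \epsilon)^\textnormal{min} = \modulus{\vec{y}}^\textnormal{min}$. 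To make sure the latter is a legitimate substitution I would check that $\modulus{\vec{y}}^\textnormal{min} > 0$: on the sliding set $\Cfrac_\textnormal{sl}$ one has $\modulus{\vec{y}} \geq b_0 = -F{\lambdan}_0 > 0$ by the definition of that set, so the fractions that remain in \cref{eq:convergence_condition} stay finite.

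With these substitutions the middle contact contribution $c\epsilon^\textnormal{max}/(\modulus{\vec{y}}+\epsilon)^\textnormal{min}$ in \cref{eq:convergence_condition} vanishes identically and is removed, while the first and third contact contributions collapse into a single fraction,
\begin{equation*}
\dfrac{1}{\beta_B^\textnormal{min}} + \dfrac{F\modulus{\vec{y}}^\textnormal{max}}{\modulus{\vec{y}}^\textnormal{min}\beta_B^\textnormal{min}} = \dfrac{1 + F\modulus{\vec{y}}^\textnormal{max}/\modulus{\vec{y}}^\textnormal{min}}{\beta_B^\textnormal{min}}.
\end{equation*}
It then remains to divide the surviving inequality through by $2\Lame$ and tidy the constants: the poromechanical term $\tfrac{\Lame}{c_{fc}\alpha^2}\bigl(\tfrac{1}{M}+c_f\varphi_0\bigr)$ becomes $\tfrac{1}{2c_{fc}\alpha^2}\bigl(\tfrac{1}{M}+c_f\varphi_0\bigr)$, the factor $2\Lame$ in front of the combined contact term cancels, and the left-hand side $4G\mathcal{D}$ becomes $2G\mathcal{D}$ once the factor $\Lame$ carried together with $\mathcal{D}$ in \cref{eq:girault_contraction} (where the corresponding coefficient is $4G\Lame\mathcal{D}$) is accounted for. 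This yields exactly the stated inequality.

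There is no substantive analytical obstacle here — the corollary is a bookkeeping reduction of the already-proved \Cref{assumption:coercivity}. The one place that calls for care, and the step I would double-check, is the rescaling: the powers of $\Lame$ must be tracked consistently between the normalization used in \cref{eq:convergence_condition} and the one used in the corollary (equivalently, one must be explicit about the factor $\Lame$ absorbed into $\mathcal{D}$), and one must confirm that sending $\epsilon \to 0$ does not simultaneously collapse $(\modulus{\vec{y}}+\epsilon)^\textnormal{min}$, which the sliding-set bound $\modulus{\vec{y}} \geq b_0 > 0$ prevents.
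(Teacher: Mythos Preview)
Your proposal is correct and matches the paper's (implicit) derivation: the corollary is stated without proof in the paper, and the intended argument is exactly the bookkeeping reduction you describe --- set $\epsilon=0$ in \cref{eq:convergence_condition}, collapse the two surviving contact terms over the common denominator $\beta_B^{\textnormal{min}}$, and divide through by $2\Lame$. Your observation that one must track the stray factor of $\Lame$ between \cref{eq:convergence_condition} and \cref{eq:girault_contraction} to land on the stated left-hand side $2G\mathcal{D}$ is well taken and is the only nontrivial check.
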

\Cref{theorem:convergence} and \cref{corollary:1} indicate that convergence depends on the friction coefficient $F$, the contact traction $\lambdafull$, the tangential displacement velocity $\ujumpt$ and the Barton-Bandis contact elasticity coefficients. The effect of the mismatch in Coulomb's friction law $\epsilon$, which is caused by the Augmented Lagrangian formulation, influences the contraction by the magnitude of the numerical method constant $c$. 

\subsection{Transferring the convergence result to the preconditioner}
\label{sec:connection_preconditioner_sequential_scheme}

We have obtained the contraction for the sequential iterative scheme of the simplified linearized model. This scheme can be reinterpreted as a preconditioned Richardson iteration for the fully coupled problem \cite{white_block-partitioned_2016}, although with constant fracture permeability. 
To compare the preconditioner that corresponds to the sequential scheme with the one presented in \Cref{sec:preconditioner}
 we present them as linear block matrices:
\begin{equation}
\label{eq:p1_p2}
\hat{P} = \left[\begin{array}{ccc;{4pt/0pt}cc}
    J_{11} & J_{12} &       &       &           \\
    J_{12} & J_{22} & J_{23}&       & J_{25}    \\
           & J_{32} & J_{33}&       & J_{35}    \\ \hdashline[4pt/0pt]
           &        &       & J_{44}& J_{45}    \\
           &        &       & J_{54}& S_{55}
\end{array}\right];
\quad
P = \left[\begin{array}{c;{4pt/8pt}cc;{4pt/0pt}c;{4pt/8pt}c}
  \tilde{J}_{11} & J_{12}  &       &       &           \\ \hdashline[4pt/8pt]
                 & S^1_{22}& J_{23}&       & J_{25}    \\
                 & S^1_{32}& J_{33}&       & J_{35}    \\ \hdashline[4pt/0pt]
                 &         &       & J_{44}& J_{45}    \\ \hdashline[4pt/8pt]
                 &         &       &       & S^{3}_{55}
\end{array}\right].
\end{equation}
The inverse of $\hat{P}$ denotes the iteration of the sequential iterative scheme, and the inverse of $P$ denotes the application of the preconditioner in \Cref{sec:preconditioner}.
The solid lines indicate the fixed stress splitting, while the dashed lines in $P$ represent the other Schur complement-based decouplings of contact mechanics and interface fluid flow according to \Cref{sec:eliminating_contact_mechanics,sec:eliminating_intf_flow}. The submatrix $S_{55}$ corresponds to the fixed stress-stabilized fluid mass balance submatrix, the expressions for $S^3_{55}$, $S^1_{22}$ and $S^1_{32}$ are given in \Cref{sec:preconditioner}. Note that the second level Schur complement $S^2$ is not an explicit part of $P$, since it is used only to form $S^3$ and there is no coupling of the contact mechanics and the elasticity with the fluid interface flow.
The main difference between two algorithms is the following: $P$ is applied to the linearly transformed problem: $J Q_r$ according to \Cref{sec:linear_transformation}, thus we have invertible $\tilde{J}_{11}$ instead of singular $J_{11}$. The following shows that the fixed stress approximation is applicably unmodified to the linearly transformed problem:
Convergence of Richardson iteration implies that $\norm{I - J \hat{P}^{-1}} < 1$. The same holds for the linearly transform problem $\norm{I - J Q_r (\hat{P} Q_r)^{-1}} < 1$. For brevity, we recast $J$, $Q_r$ and $\hat{P}$ as 2 by 2 block matrices where the first row and column correspond to indices 1, 2, and 3, and the second row and column correspond to indices 4 and 5:
\begin{equation}
    J = \begin{bmatrix}
        A & B \\
        C & D
    \end{bmatrix};
    \quad
    Q_r = \begin{bmatrix}
        W &  \\
          & I
    \end{bmatrix};
    \quad
    \hat{P} = \begin{bmatrix}
        A &   \\
          & S_\text{fs}
    \end{bmatrix}
    \cdot
    \begin{bmatrix}
        I & A^{-1} B \\
          & I
    \end{bmatrix},
\end{equation}
where $S_\text{fs}$ is the fixed stress-based approximation of the corresponding Schur complement, and $W$ is an invertible matrix that contains the linear transformation according to \cref{eq:Qright}. We construct the transformed preconditioner $\hat{P} Q_r$ as the upper-triangular factorization of the transformed Jacobian:
\begin{equation}
    J Q_r = \begin{bmatrix}
        A W & B \\
        C W & D
    \end{bmatrix};
    \quad
    \hat{P} Q_r = \begin{bmatrix}
        A W &   \\
            & S_q
    \end{bmatrix}
    \cdot
    \begin{bmatrix}
        I & (AW)^{-1}B \\
          & I
    \end{bmatrix},
\end{equation}
where $S_q$ is the Schur complement of the transformed matrix: $S_q \coloneq D - C W (A W)^{-1} B$. Since both $W$ and $A$ are invertible, $S_q$ exactly matches the Schur complement of the untransformed problem: $D - C A^{-1} B$. Therefore, we can utilize the fixed stress-based approximation $S_q \approx S_\text{fs}$ unmodified after the linear transformation, and the preconditioned matrix produce the same eigenvalues spectrum before and after the linear transformation.
As the last step to connect
$\hat{P} Q_r$ and $P$, we replace the submatrices $AW$ and $S_\text{fs}$ in $\hat{P} Q_r$ with their upper triangular counterparts according to the block $\mathcal{LDU}$ decomposition, namely, we decouple the contact mechanics from the elasticity and the interface fluid flow from the fluid mass balance. This substitution provides a spectrally equivalent preconditioner if decoupling is done exactly, and the resulting matrix is $P$.
Therefore, the contraction of the fixed stress iterative scheme implies good performance of $P$ in exact arithmetic. 

\section{Numerical experiments}
\label{sec:numerical_experiments}
The code for the numerical experiments is available on GitHub\footnote{\href{https://github.com/Yuriyzabegaev/FTHM-Solver/}{https://github.com/Yuriyzabegaev/FTHM-Solver/}} as well as the Docker image \cite{zabegaev_2025_14609885}. It is written in Python and is based on the mixed-dimensional porous media simulation framework PorePy \cite{porepy2024}. The computational simplex grids are generated by Gmsh \cite{gmsh}, while
PETSc \cite{petsc4py,petsc_user_manual} is used as a core for the linear solver routines, providing GMRES and Richardson and ILU algorithms. The classical algebraic multigrid method is provided by HYPRE BoomerAMG \cite{hypre}. One V-cycle of AMG is used for the fluid mass and the mechanics subproblems. The most expensive part of the preconditioner is construction and application of the AMG for mechanics, having the strong threshold of $0.7$. The fluid mass subproblem AMG has the truncation factor set to $0.3$, all the other BoomerAMG configuration options remain default.
For better conditioning, the mass unit is taken to be $10^{10}$ kg, and the contact force primary variable $\lambdafull$ unit is scaled by Young modulus. 
In the experiments, we use the preconditioner either with Richardson iterations or with GMRES. The former applies the preconditioner at the left side, which corresponds to the sequential scheme algorithm according to \Cref{sec:connection_preconditioner_sequential_scheme}, while the latter uses the right preconditioner, as this allows us to control the unpreconditioned residual. 
The relative convergence criterion is set to $10^{-10}$ for both algorithms, while the absolute criterion is set to $10^{-10}$ for Richardson iterations and $10^{-15}$ for GMRES.
The difference is motivated by the different scaling of the preconditioned and unpreconditioned residuals. For the Newton iterations, we apply a relative residual criterion of $10^{-7}$. GMRES restarts after 30 iterations with the iteration limit of 90.
The numerical experiments are ran with three models: The first model considers a 2D medium with a single embedded fracture, the second model is also 2D but with multiple line fractures and intersections, while the final model is in a 3D domain with multiple plane fractures and their intersections. The domain is a square with 1 km sides in 2D, or a cube with 1 km sides in 3D. For all the models, the south side of the domain is fixed, while the other sides are free to move and are subject to constant compressive forces. The model has pressure boundary conditions set on all sides. The boundary pressure matches the reference pressure everywhere except for the east boundary, where a higher pressure is applied to create a pressure gradient that drives fluid flow through the model. No source terms are included in the model. The setup is designed to trigger the various states of the frictional contact problem (stick, slide and open) and allows us to investigate the robustness and efficiency of the preconditioner. The material parameters used in the experiments are given in \cref{tab:material_params}. The reference pressure is set at $1$ MPa.

\begin{figure}[htb]
    \centering
    \includegraphics[width=0.7\linewidth]{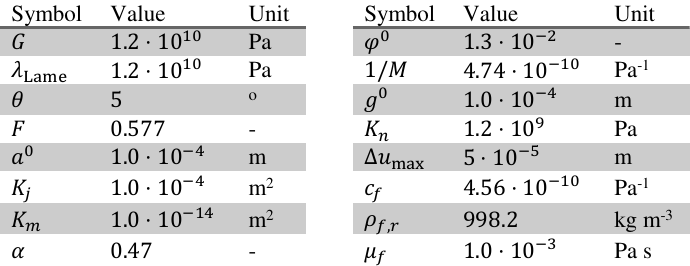}
    \caption{Material parameters used in the numerical experiments. $K_m$ denotes matrix isotropic permeability. The other symbols are defined in \Cref{sec:model_description}.}
    \label{tab:material_params}
\end{figure}
\subsection{Testing the iterative scheme, robustness against model parameters}
The first model contains a single horizontal fracture located halfway up the y-axis, extending from 0.2 km to 0.8 km along the x-axis. The east boundary pressure is set to 10 times the reference pressure. Compressive forces of $5$ MPa are applied normally to all boundaries. The north boundary has an additional tangential force component of $0.5$ MPa.
The simulated time is 3 days with 0.5-day time steps.
We first investigate the robustness of the sequential iterative scheme developed in \Cref{sec:convergence_study}.
To that end, we consider the restricted physics model presented therein and solve the linear system by Richardson iterations with the preconditioner $\hat{P}$ from \cref{eq:p1_p2} with exact solvers for the two submatrices. This linear solver algorithm exactly matches the sequential iterative scheme defined by \cref{eq:mass_balance_fs,eq:mass_balance_frac_fs,eq:flux_fs,eq:flux_frac_fs,eq:elasticity_fs,eq:contact_normal_fs}.
Though this algorithm is not scalable, the experiment aids in the exploration of the robustness of the practical preconditioner $P$. 
\Cref{theorem:convergence} indicates that the convergence of the sequential scheme
is affected by the friction coefficient $F$ and $\beta_B$ from \cref{eq:auxiliary_coefficients},
where the latter can be controlled by the elastic stiffness parameter $K_n$. To investigate the sensitivity of the convergence behavior, we run the simulation with $K_n \in \{0, 1.2\text{e}5,  1.2\text{e}9, 1.2\text{e}13, 1.2\text{e}17, 1.2\text{e}20\}$ Pa; the case $K_n=0$ is not covered by the analysis.
We also apply three different values of $F: \{0.1, 0.577, 0.8\}$.
The average number of Richardson iterations per Newton iteration are listed in \cref{tab:bb_friction}.
As can be seen, the iteration count primarily depends on $F$, with the higher value requiring more iterations to converge, aligning with the analysis. The variation of $K_n$ shows no clear effect on the convergence.
We repeated the experiment using the linearly transformed Jacobian from \Cref{sec:linear_transformation} and operator $P$ (with three Schur complements and direct subsolvers) instead of $\hat{P}$. The iteration counts were similar, so the results are omitted. We also observed that fracture position significantly impacts iteration counts: Positioning the fracture near the domain boundary require more iterations. This aligns with \cref{theorem:convergence}, where the effect is embedded in the $D$ coefficient. We also examined the magnitudes of the \cref{assumption:sticking_ut_zero} violation and the Coulomb's law mismatch $\epsilon$ from \cref{eq:auxiliary_coefficients} and did not find any correlation with the number of iterations.
\begin{table}[t]
    \centering
        \begin{tabular}{lrrrrrr}
        \toprule
         $K_n$ & $0$ & $1.2\text{e}5$ & $1.2\text{e}9$ & $1.2\text{e}13$ & $1.2\text{e}17$ & $1.2\text{e}20$ \\
        \midrule
        $F=0.1$ & 19.9 & 19.9 & 17.2 & 19.9 & 19.9 & 19.9 \\
        $F=0.577$ & 25.5 & 25.9 & 26.0 & 25.2 & 25.8 & 25.8 \\
        $F=0.8$ & 27.2 & 27.2 & 26.6 & 27.1 & 27.1 & 27.1 \\
        \bottomrule
        \end{tabular}
    \caption{Average count of Richardson iterations in simulations with various $K_n$ and $F$ coefficients.}
    \label{tab:bb_friction}
\end{table}
\subsection{Grid refinement of a 2D model}
Next, we apply the full physical model from \Cref{sec:model_description} to the same geometry and fix the values of the coefficients to $F = 0.577$ and $K_n = 1.2\text{e}9$ Pa. 
To ensure that some fracture cells are open we set the high pressure on the east boundary to 13 times the reference pressure. 
We investigate two algorithms: The first applies GMRES to the linearly transformed system from \Cref{sec:linear_transformation} using the $P$ preconditioner with direct subsolvers. The second replaces the direct subsolvers with AMG and ILU to test the practical algorithm.
The average number of GMRES iterations per Newton iteration as well as the Newton solver per time step are given in \cref{tab:model1_grid_refinement}. 
The number of GMRES iterations with direct subsolvers remains constant across the three available approximation levels, but the number of possible grid refinement levels is constrained by the direct subsolver's computational cost.
When replacing the direct subsolvers with AMG, the number of iterations grows slightly, with an increase of a factor of 2 between the coarsest and finest grid, corresponding to a three orders of magnitude increase in the number of degrees of freedom.

\begin{table}[t]
    \centering
        \begin{tabular}{lllllll}
        \toprule
        Total DoFs & 798 & 3018 & 17790 & 435492 & 757962 & 1111782 \\
        \midrule
        GMRES dir. & 3.4 & 3.3 & 3.2 & - & - & - \\
        GMRES AMG & 15.3 & 16.9 & 18.5 & 24.9 & 36.6 & 28.0 \\
        Newton iters. & 3.5 & 3.3 & 4.3 & 8.0 & 8.0 & 7.2 \\
        \bottomrule
        \end{tabular}
    \caption{2D geometry with 1 fracture. The average count of GMRES iterations per Newton iteration with the direct (``\textnormal{dir.}'') and AMG subsolvers. The last row is the average number of Newton iterations per time step.}
    \label{tab:model1_grid_refinement}
\end{table}
\subsection{Increasing geometric complexity}
\label{sec:experiment_2d_multiple_frac}
\begin{figure}[htb]
    \centering
    \includegraphics[width=1\linewidth]{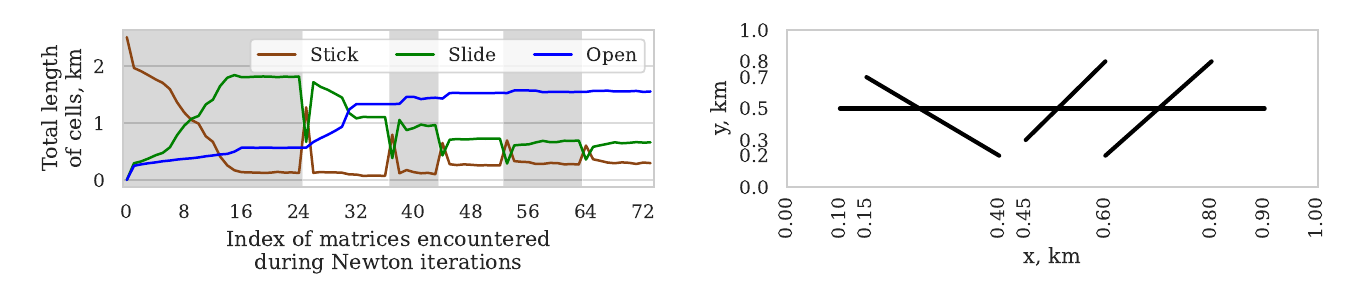}
    \caption{State (sticking, sliding or open) of fracture cells in the 2D experiment with multiple fractures (left). Shaded and white background colors distinguish different time steps. Geometry of the fractures in the same experiment (right).}
    \label{fig:fractures}
\end{figure}
Here, we investigate the performance of the preconditioner for the 2D model with multiple fractures. The geometry consists of one horizontal fracture extending from 0.1 km to 0.9 km and three differently rotated fractures, which intersect the horizontal one as shown in \cref{fig:fractures}.
The domain is subjected to normal compression of 3 MPa from the north, east, and west sides. The east side also has a high-pressure boundary condition, exceeding the reference pressure by a factor of 10. The full physical model is used to simulate the time of 1.5 hours with a time step of 0.25 hours. The dynamics of the fracture cell states is shown in \cref{fig:fractures}. Similarly to the previous experiment, we apply the preconditioner with the direct and the AMG subsolvers and refine the grid.
Results are listed in \cref{tab:model2_grid_refinement}. No increase in GMRES iteration count is observed for the direct subsolvers case, and only a small increase occurs for the AMG subsolvers. During this experiment, a few linear solver breakdowns occurred due to failure of AMG for mechanics. We attribute these failures to challenges in tuning AMG parameters for the MPSA discretization used in mixed-dimensional elasticity, as a perfectly scalable AMG method for this discretization has yet to be developed. However, the overall preconditioner does not depend on this specific discretization and can be applied with other spatial discretizations where elasticity solvers are more robust, such as finite element methods. In cases of solver breakdown, we accepted the linear solution and continued with the Newton iterations.
\begin{table}[t]
    \centering
        \begin{tabular}{lllllll}
        \toprule
        Total DoFs & 1260 & 3714 & 19512 & 445608 & 769842 & 1126848 \\
        \midrule
        GMRES dir. & 5.8 & 4.9 & 4.4 & - & - & - \\
        GMRES AMG & 34.1 & 33.4 & 35.4 & 39.7 & 38.8 & 40.4 \\
        Newton iters. & 5.8 & 6.3 & 7.5 & 10.3 & 10.3 & 12.3 \\
        \bottomrule
        \end{tabular}
    \caption{2D geometry with multiple fractures. The average count of GMRES iterations per Newton iteration with the direct (``\textnormal{dir.}'') and AMG subsolvers. The last row is the average number of Newton iterations per time step.
    }
    \label{tab:model2_grid_refinement}
\end{table}

\subsection{3D scalability}
In the final experiment, we perform a 3D simulation with the fractures extended from the previous experiment into the third dimension, starting at 0.2 km and ending at 0.8 km. The full physical model is used, except for the fracture permeability, which is kept constant to avoid Newton convergence issues. The fracture permeability is set equal to that of the surrounding domain. The east boundary is subjected to high pressure, 20 times greater than the reference pressure. 
All boundaries except the south are compressed by a normal force of 3 MPa, with the north boundary also subjected to a tangential force of 0.3 MPa. The simulated time is 1.5 weeks with a time step of 0.25 weeks. Again, we refine the grid to test the scalability of the preconditioner with the direct subsolvers and AMG. The results are given in \cref{tab:model3_grid_refinement}, where we observe almost no changes in GMRES iterations with grid refinement up to almost $10^6$ degrees of freedom. Notably, we observed no linear solver breakdowns this time.
\begin{table}[htb]
    \centering
        \begin{tabular}{lllllll}
        \toprule
        Total DoFs & 3154 & 6461 & 29002 & 174668 & 540730 & 938422 \\
        \midrule
        GMRES AMG & 31.0 & 29.5 & 26.7 & 28.6 & 27.7 & 28.8 \\
        Newton iters. & 6.0 & 8.0 & 9.5 & 13.8 & 15.0 & 15.0 \\
        \bottomrule
        \end{tabular}
    \caption{3D geometry with multiple fractures. The average count of GMRES iterations per Newton iteration with the AMG subsolvers. The last row is the average number of Newton iterations per time step.}
    \label{tab:model3_grid_refinement}
\end{table}
\section{Conclusion}
\label{sec:results}

In this study, we developed a robust and scalable preconditioner for solving the frictional contact poromechanics problem involving fractures. The proposed approach efficiently addresses the complex coupling between contact mechanics and poromechanics, particularly focusing on the saddle-point structure introduced by the contact mechanics equations. By combining the fixed stress method with a linear transformation approach for handling the saddle point system, we ensured the convergence and stability of the preconditioner. Our approach is applicable to any discretization of the contact (poro)mechanics problem which adopts Lagrange multiplier-based approaches.
Through numerical experiments, we demonstrated the preconditioner's scalability and robustness. The results showed that the preconditioner maintains computational efficiency across a range of problem sizes, from small-scale systems to large-scale systems of a size relevant in real-world applications.

\section*{Acknowledgment}
This project has received funding from the VISTA program, The Norwegian Academy of Science and Letters and Equinor and from
the Norwegian Research Council, Grant 308733.

\bibliography{references}
\bibliographystyle{unsrt}

\end{document}